\newtheorem{theorem}{Theorem}[section]
\newtheorem{lemma}[theorem]{Lemma}
\newtheorem{proposition}[theorem]{Proposition}
\theoremstyle{definition}
\newtheorem{example}[theorem]{Example}
\theoremstyle{remark}
\numberwithin{equation}{section}
\DeclareMathOperator{\supp}{supp}
\DeclareMathOperator{\Ann}{Ann}
\begin{document}

\title{\textbf{The ascending chain condition for principal left or right ideals of skew generalized power series rings}}

\author {\bf  F. Padashnik, A. Moussavi and H. Mousavi}
\date{}
\maketitle
\begin{center}

{\small Department of Pure Mathematics, Faculty of Mathematical
Sciences,\\ Tarbiat Modares University, Tehran, Iran, P.O. Box:
14115-134.{\footnote {\noindent Corresponding author.
moussavi.a@modares.ac.ir and  moussavi.a@gmail.com.\\
\indent f.padashnik@modares.ac.ir \\
\indent h.moosavi@modares.ac.ir .}}
}\\
\end{center}
\begin{abstract}
Let $R$ be a ring, $(S,\leq)$ a strictly ordered monoid and $\omega:
S\rightarrow End(R)$ a monoid homomorphism. In this paper we study the ascending chain conditions on principal left (resp. right) ideals of the skew generalized
power series ring $R[[S,\omega ]]$.
 Among other results, it is shown that $R[[S,\omega ]]$   is
 a right archimedean reduced ring if  $S$ is an Artinian
strictly totally ordered monoid,  $R$ is a right archimedean and
$S$-rigid ring which satisfies the ACC on annihilators and
$\omega_s$ preserves nonunits of $R$ for each $s\in S$.
 As a  consequence we deduce that the
power series rings, Laurent series rings, skew power series rings,
skew Laurent series rings and generalized power series rings
are reduced satisfying the ascending chain condition on principal
left (or right) ideals.
 It is also proved that, the skew Laurent polynomial ring $R[x,x^{-1};\alpha]$ satisfies \emph{ACCPL(R)},
  if $R$ is $\alpha$-rigid and satisfies \emph{ACCPL(R)} and the $ACC$ on left(resp. right) annihilators.
   Examples are provided to illustrate and delimit our results.
\end{abstract}

\textit{Key words:} skew generalized power
series ring;  right archimedean ring; annihilator; Artinian strictly
totally ordered monoid; skew Laurent series ring; skew Laurent
polynomial ring.?
?
?\textit{?subjclass: 16D15; 16D40; 16D70}?

\section{Introduction}

Throughout this paper all monoids and rings are with identity
element that is inherited by submonoids and subrings and preserved
under homomorphisms, but neither monoids nor rings are assumed to be
commutative.\par

A commutative ring $R$ is said to satisfy the ascending chain condition for
principal ideals (\emph{ACCP}), if there does not exist an infinite strictly ascending chain of principal ideals of $R$ (see, for example, Dumitrescu et al., \cite{Dumitrescu} or Frohn, \cite{frohnACC}).
The \emph{ACCP} is also called $1$-\emph{ACC} in Frohn \cite{counter}. Clearly every Noetherian ring satisfies \emph{ACCP}. Ribenboim \cite{Ribenboim} gave a sufficient condition for the ring $R[[S]]$ of generalized power series being Noetherian. Varadarajan \cite{Varadarajan} studied Noetherian generalized power series rings. Frohn \cite{counter},
gave an example to show that \emph{ACCP} does not rise to the power series ring in general.
In Dumitrescu et al. (\cite{Dumitrescu}, Proposition 1.2) and Anderson et al. (\cite{andersonI}, Proposition
1.1) the authors gave a necessary and sufficient condition under which the rings
$A+XB[[X]]$ and $A+XB[X]$ satisfy \emph{ACCP} where $A\subseteq B$ are domains and $X$ is an indeterminate. \par

 A partially ordered set $(S, \leq
)$ is called \textit{Artinian} if every strictly decreasing sequence
of elements of $S$ is finite, and $(S, \leq )$ is called
\textit{narrow} if every subset of pairwise order-incomparable
elements of $S$ is finite. Thus, $(S, \leq )$ is Artinian and narrow
if and only if every nonempty subset of $S$ has at least one but
only a finite number of minimal elements.

Clearly, the union of a finite family of artinian and narrow subsets of an ordered set as well as any
subset of an artinian and narrow set are again artinian and narrow.
An \emph{ordered monoid} is a pair $(S, \leq )$ consisting of a
monoid $S$ and an order $\leq $ on $S$ such that for all $a, b, c\in
S$, $a \leq b$ implies $ca\leq cb$ and $ac\leq bc$. An ordered
monoid $(S, \leq )$ is said to be \emph{strictly ordered} if for all
$a, b, c\in  S$, $a < b$ implies $ca < cb$ and $ac < bc$.\par For a
strictly ordered monoid $S$ and a ring $R$,  Ribenboim
\cite{Ribenboim-semisimple} defined the ring of generalized power
series $R[[S]]$ consisting of all maps from $S$ to $R$ whose support
is Artinian and narrow  with  the pointwise addition and the
convolution multiplication. This construction provided interesting
examples of rings (e.g., Elliott and Ribenboim,
\cite{Elliott-Ribenboim 1992}; Ribenboim, \cite{Ribenboim
(1995a)},\cite{Ribenboim (1995b)}) and it was extensively studied by
many authors.\par

 In  \cite{zim}, R. Mazurek and M. Ziembowski, introduced a ``twisted" version of the Ribenboim
construction and study when it produces a von Neumann regular ring.
Now we recall the construction of the skew generalized power series
ring introduced in \cite{zim}. Let $R$ be a ring, $(S, \leq )$ a
strictly ordered monoid, and $\omega: S\rightarrow End(R)$ a monoid
homomorphism. For $s \in S$, let $\omega_{s}$ denote the image of
$s$ under $\omega$, that is $\omega_{s}=\omega (s)$. Let $A$ be the
set of all functions $f: S\rightarrow R$ such that the support
$\supp(f)= \{ s\in  S : f(s)\neq 0 \}$ is Artinian and narrow. Then
for any $s \in S$ and
$f,g \in A $ the set\\
\indent\indent\indent\indent\indent\indent\indent$X_{s}( f, g)=\{(x, y) \in \supp(f) \times \supp(g):  s=xy\}$\\
is finite. Thus one can define the product $f g: S\rightarrow R$ of
$f, g \in A$ as follows: \\
\indent\indent\indent\indent\indent\indent\indent\indent\indent$fg(s)=\sum
\limits_{ (u,v)\in X_{s}(f,g)}f(u)\omega_{u} (g(v)),$\\ (by
convention, a sum over the empty set is $0$). With pointwise
addition and multiplication as defined above, $A$ becomes a ring,
called \textit{the ring of skew generalized power series} with
coefficients in $R$ and exponents in $S$ (one can think of a map
$f:S\rightarrow R$ as a formal series $\sum\limits_{s\in S}r_ss,$
where $r_s = f(s)\in R$) and denoted either by
$R[[S^{\leq},\omega]]$ or by $R[[S,\omega ]]$ (see \cite{unified}
and \cite{von}).
For every $r\in R$ and $s\in S$ we associate the maps $c_r,e_s:S\longrightarrow R$ defined by
\begin{align}\label{e_s}
c_r(x)=\begin{cases}
r\quad; x=1 \\
0 \quad ; \text{Otherwise}
\end{cases},
e_s(x)=\begin{cases}
1\quad; x=s \\
0 \quad  ; \text{Otherwise}
\end{cases}
\end{align}
where $x\in S$.
In fact, $c_r(x)$ and $e_s(x)$ are like $r$ and $x^s$ in $R[x]$ respectively. \\

A ring $R$ is said to satisfy the ascending chain condition on
principal left ideals (\emph{ACCPL}) if there does not exist an
infinite strictly ascending chain of principal left ideals of $R$.
Rings satisfying the ascending chain condition on principal right
ideals (\emph{ACCPR}) are defined analogously.\par

Obvious examples of rings satisfying \emph{ACCPL} are left
Noetherian rings. Also every left perfect ring satisfies
\emph{ACCPL}, since by a celebrated theorem of Bass (see
\cite{Bass}) the left perfect condition is equivalent to the
descending chain condition on principal right ideals, which in turn
implies \emph{ACCPL}, by Jonah's theorem from \cite{Jonah}. In the
commutative case the ascending chain condition on principal ideals
(\emph{ACCP}) appears naturally in studies of factorization in
domains (e.g., \cite{andersonI,Dumitrescu}; see also \cite{Cohn},
Section 2]). For commutative rings several authors studied the
passage of \emph{ACCP} to some classical ring constructions such as
localizations (e.g. \cite{andersonII,Grams,HeinzerAcc}), polynomial
rings (e.g. \cite{frohnACC,frohnACCP,lantz}), monoid rings (e.g.
\cite{Kim}) or power series rings (e.g. \cite{counter}). In
\cite{liu} the \emph{ACCP} condition for commutative generalized
power series rings was studied and it was proved that if $R$ is a
commutative domain and $S$ is a commutative strictly totally ordered
monoid, then the ring $R[[S]]$ of generalized power series with
coefficients in $R$ and exponents in $S$ satisfies \emph{ACCP} if
and only if $R$ and $S$ satisfy \emph{ACCP} (see \cite[Theorem
3.2]{liu}).\\

Frohn in \cite{frohnACC}
showed that  if a ring $R$ satisfies \emph{ACCP} and $R[X]$ has \emph{ACC} on
annihilator ideals, then $R[X]$ also satisfies \emph{ACCP} for commutative
rings. \par

R. Mazurek and M. Zimbowski \cite{zim} proved  that, if $R$ is a
domain and $\omega$ an endomorphism of  $R$, then  $R$ satisfies
\emph{ACCPL} and $\omega_s$  is injective  for each $s\in S$
 if and only if $R[[S,\omega]]$ is a domain and satisfies \emph{ACCPL}; and $R$ is an \emph{ACCPR}-domain and $\omega_s$ is injective for every
$s\in S$ and preserves nonunits of $R$ if and only if
$R[[S,\omega]]$ is an \emph{ACCPR}-domain.\par
Nasr-Isfahani  \cite{nasr} extended Frohn's theorem to the
ring $R[x; \alpha, \delta]$ with some conditions on $R$ and
$\alpha$.\par

According to Krempa \cite{krem}, an endomorphism $\alpha$ of a ring $R$ is
said to be \textit{rigid} if $a\alpha(a)=0$ implies $a=0$, for $a
\in R$. A ring $R$ is said to be $\alpha$-\textit{rigid} if there
exists a rigid endomorphism $\alpha$ of $R$. Clearly, every domain
$D$ with a monomorphism $\alpha$ is rigid. It is clear that, $\alpha$-rigid
rings are \emph{reduced} (rings with no non-zero nilpotent
elements) see e.g., \cite{moussavi}.\par

A ring $R$ is left \textit{archimedean}, if and only if for each
nonunit element $r\in R$ we have $\bigcap_{n\in \mathbb{N}}Rr^n=
\{0\}$. Right archimedean rings are defined similarly.\par

It is well known that each \emph{ACCP}-domain is archimedean. In
fact, if $R$ is a domain, then, by Z. Liu \cite{liu}, for any domain
$R$,  $R$ satisfies \emph{ACCPL} if and only if $\bigcap_{n\in
\mathbb{N}} r_1r_2 \cdots r_nR = \{0\}$ for any sequence
$(r_n)_{n\in \mathbb{N} }$ of nonunits of $R$.\par

In section $2$, we prove that, if $S$ is an Artinian strictly totally ordered monoid,
$R$ is an $S$-rigid ring satisfying \emph{ACCPR}, and $\omega_s$  an
automorphism of $R$ for each $s\in S$, then $R[[S,\omega]]$ is an \emph{ACCPR} ring. Also we show that, if $R$ is an $\alpha$-rigid ring satisfying \emph{ACCPL(R)}  and $ACC$ on left (or right) annihilators, then the skew Laurent polynomial ring $R[x,x^{-1};\alpha]$ satisfies \emph{ACCPL(R)}.\par

In section $3$, we prove that, if $R$ is a right archimedean domain, $S$   an Artinian strictly
totally ordered monoid and $\omega_s$ is injective for any $s\in S$
and preserves nonunit elements of $R$, then $R[[S,\omega]]$ is a
right archimedean domain. We also show that
when $S$ is an Artinian strictly totally ordered monoid,  $R$ is a left archimedean $S$-rigid ring that satisfies \emph{ACC} on annihilators, then $R[[S,\omega]]$ is a left archimedean ring. Also,
when $R$ is a right archimedean $S$-rigid ring that satisfies \emph{ACC} on annihilators and if $\omega_s$ preserves nonunits of $R$ for each $s\in S$, then $R[[S,\omega]]$ is a right archimedean ring.

\section{The Skew Generalized Power Series Rings Satisfying ACCPR or ACCPL}

In this section, we first study the skew generalized power series ring $R[[S,\omega]]$  satisfies \emph{ACCPL} or \emph{ACCPR}, and next we consider
the skew Laurent polynomial ring $R[x,x^{-1};\alpha]$, where $\alpha$ is a monomorphism.
 G. Marks et al.  in \cite[Theorem 4.12]{unified}
proved that if $(S,\le)$ is an a.n.u.p.-monoid, then $R$ is $S$-rigid if and only if $R[[S,\omega]]$ is reduced.\\

The following characterization of \emph{ACCPL}-domains \cite[Lemma
3.1]{liu}, is useful in the sequel.

\begin{proposition}
For any domain $R$, the following are equivalent:

\indent(1) $R$ satisfies \emph{ACCPL}.

\indent(2) For any sequences $(a_n)_{n\in \mathbb{N}}$, $(b_n)_{n\in
\mathbb{N}}$ of elements of $R$ such that $a_n=b_na_{n+1}$ for all
$n\in  \mathbb{N}$, there exists $m\in \mathbb{N}$ with $b_n\in
U(R)$ for all $n\ge m$.

\indent(3) For any sequences $(a_n)_{n\in \mathbb{N}}$, $(b_n)_{n\in
\mathbb{N}}$ of elements of $R$ such that $a_n=b_na_{n+1}$ for all
$n\in \mathbb{N}$, there exists $m\in \mathbb{N}$ with $b_m\in
U(R)$.

\indent(4) $\bigcap_{n\in \mathbb{N}}r_1r_2\cdots r_nR=\{0\}$ for
any sequence $(r_n)_{n\in \mathbb{N}}$ of nonunits of $R$.
\end{proposition}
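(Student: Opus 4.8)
The plan is to prove the four conditions equivalent by establishing the cycle $(1)\Rightarrow(2)\Rightarrow(3)\Rightarrow(4)\Rightarrow(1)$, exploiting throughout the two features that make a domain convenient here: one may cancel any nonzero element, and an element possessing a one-sided inverse is automatically a unit. The organizing observation is a dictionary between chains and sequences: to give an ascending chain of principal left ideals $Ra_1\subseteq Ra_2\subseteq\cdots$ is the same as to give a relation $a_n=b_na_{n+1}$ (with $b_n\in R$ witnessing $a_n\in Ra_{n+1}$), and, provided $a_{n+1}\neq0$, one has $Ra_n=Ra_{n+1}$ if and only if $b_n\in U(R)$. Indeed, equality forces $a_{n+1}=ca_n=cb_na_{n+1}$, so $(cb_n-1)a_{n+1}=0$ and cancellation gives $cb_n=1$; as a left inverse in a domain is a two-sided inverse, $b_n\in U(R)$.

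For $(1)\Rightarrow(2)$, I would start from sequences with $a_n=b_na_{n+1}$ and form the ascending chain $Ra_1\subseteq Ra_2\subseteq\cdots$. Because a nonzero term forces every later term to be nonzero, it suffices to treat the case in which all $a_n\neq0$, the eventually-zero case being degenerate. Then \emph{ACCPL} stabilizes the chain beyond some index $m$, and the dictionary turns each equality $Ra_n=Ra_{n+1}$ (for $n\geq m$) into $b_n\in U(R)$, which is precisely (2). The implication $(2)\Rightarrow(3)$ is immediate, taking the $m$ supplied by (2).

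For $(3)\Rightarrow(4)$, I would argue contrapositively: if some nonunit sequence $(r_n)$ had $0\neq a\in\bigcap_n r_1\cdots r_nR$, write $a=(r_1\cdots r_n)c_n$ for each $n$; comparing the expressions for $a$ at $n$ and $n+1$ and cancelling the nonzero factor $r_1\cdots r_n$ yields $c_n=r_{n+1}c_{n+1}$, with every $c_n\neq0$. Feeding $a_n=c_n$, $b_n=r_{n+1}$ into (3) produces some $r_{m+1}\in U(R)$, contradicting the choice of the $r_n$ as nonunits. For the closing $(4)\Rightarrow(1)$, I would again use the contrapositive: an infinite strictly ascending chain $Ra_1\subsetneq Ra_2\subsetneq\cdots$ gives $a_n=b_na_{n+1}$ with every $b_n$ a nonunit (strictness rules out $b_n\in U(R)$ by the dictionary), and unfolding gives $a_1=(b_1\cdots b_n)a_{n+1}\in(b_1\cdots b_n)R$ for all $n$; thus $0\neq a_1\in\bigcap_n b_1\cdots b_nR$ with the $b_n$ nonunits, contradicting (4).

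The routine parts are the chain/sequence bookkeeping; the point demanding care is the handling of zeros and, more essentially, the left--right translation in (4). Conditions (1)--(3) live on the left (principal left ideals, left factorizations $a_n=b_na_{n+1}$), whereas (4) is phrased with right multiples $r_1\cdots r_nR$, and the hinge reconciling them is the cancellation step $c_n=r_{n+1}c_{n+1}$, which converts the single nonzero element of a descending intersection of principal right ideals into an ascending chain of principal left ideals. I expect this translation, together with the repeated appeal to the two domain facts (cancellation and one-sided$\Rightarrow$two-sided invertibility), to be where the real content sits.
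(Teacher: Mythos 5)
The paper never proves this proposition: it is quoted from Z.\ Liu's paper (cited there as Lemma 3.1 of \cite{liu}), so there is no internal proof to compare yours against, and your attempt has to be judged on its own terms. Your cycle $(1)\Rightarrow(2)\Rightarrow(3)\Rightarrow(4)\Rightarrow(1)$, built on the dictionary between chains $Ra_1\subseteq Ra_2\subseteq\cdots$ and factorizations $a_n=b_na_{n+1}$, together with the two domain facts (cancellation, and one-sided invertible $\Rightarrow$ unit), is the standard argument for results of this type, and your steps $(2)\Rightarrow(3)$ and $(3)\Rightarrow(4)$ are carried out correctly; in particular the left--right hinge $c_n=r_{n+1}c_{n+1}$ in $(3)\Rightarrow(4)$ is exactly right.

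Two issues concern zeros, one cosmetic and one substantive. Cosmetic: in $(4)\Rightarrow(1)$ your $a_1$ may be zero (the chain $\{0\}=Ra_1\subsetneq Ra_2\subsetneq\cdots$ is strictly ascending), so ``$0\neq a_1\in\bigcap_n b_1\cdots b_nR$'' is unjustified as written; simply discard the first term and argue from $a_2$, which strictness forces to be nonzero. Substantive: the case you wave away in $(1)\Rightarrow(2)$ as ``degenerate'' cannot in fact be handled, because for the identically zero sequence $a_n=0$ the relation $a_n=b_na_{n+1}$ holds for \emph{every} sequence $(b_n)$ --- take all $b_n=0$ --- and then no $b_n$ is ever a unit. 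Thus condition (2) as printed is false in every domain, and $(1)\Rightarrow(2)$ is not provable as stated. This is a defect of the paper's transcription rather than of your strategy: the correct (and originally cited) formulation restricts (2) and (3) to sequences of \emph{nonzero} elements. Under that hypothesis your treatment is complete --- nonzeroness propagates forward along $a_n=b_na_{n+1}$, the dictionary applies, and the rest of the cycle is untouched (your $(3)\Rightarrow(4)$ already feeds only nonzero $c_n$ into (3)). You should say explicitly that you are proving the corrected statement, rather than leaving the zero case as an unexamined ``degenerate'' one, since as it stands that single case is a counterexample to the implication you claim.
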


The following lemmas are very useful in our proofs in this section. Their proof are similar to what Hong et al. proved in \cite[Lemma 4]{ore}.
\begin{lemma}\label{jav}
Let $R$ be a $S$-rigid ring and $a,b \in R$. Then we have the
following

(1) If $ab=0$, then for any $s\in S$ we have $a\omega_{s^n}(b)=\omega_{s^n}(a)b=0$  for any positive integer $n$.

(2) If for some $s\in S$ and positive integer $k$, $a\omega_{s^k}(b)=\omega_{s^k}(a)b=0$, then $ab=0$.
\end{lemma}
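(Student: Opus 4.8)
The plan is to reduce both parts to the classical rigid-endomorphism lemma by fixing $s$ and working with the single endomorphism $\alpha:=\omega_s$. Since $R$ is $S$-rigid, the relation $r\omega_s(r)=0$ forces $r=0$ for every $r\in R$, so $\omega_s$ is a rigid endomorphism of $R$ for each fixed $s\in S$; hence $R$ is $\omega_s$-rigid and, as recorded in the excerpt, reduced. Because $\omega$ is a monoid homomorphism we have $\omega_{s^n}=\omega_s^{\,n}$, so it suffices to prove both statements with $\omega_{s^n}$ replaced by the $n$-th power of the single rigid endomorphism $\alpha=\omega_s$. Before starting the main computations I would record the two reduced-ring identities that drive everything: $ab=0\iff ba=0$, and $ab=0\Rightarrow aRb=0$, both obtained by squaring and using that $R$ has no nonzero nilpotents.

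For part (1) I would first treat the case $n=1$. To see $ab=0\Rightarrow a\alpha(b)=0$, apply rigidity to $x=a\alpha(b)$: using $ba=0$ and $\alpha(ba)=\alpha(b)\alpha(a)=0$ one computes $x\alpha(x)=a\,\bigl(\alpha(b)\alpha(a)\bigr)\,\alpha^2(b)=0$, whence $x=0$. The companion statement $\alpha(a)b=0$ follows symmetrically by applying rigidity to $y=b\alpha(a)$ and using $\alpha(a)\alpha(b)=\alpha(ab)=0$, then converting $b\alpha(a)=0$ to $\alpha(a)b=0$ by reducedness. The general $n$ is then immediate by induction: from $a\alpha^{n}(b)=0$ set $b'=\alpha^{n}(b)$ and apply the $n=1$ case to the pair $(a,b')$ to obtain $a\alpha^{n+1}(b)=0$, and likewise for $\alpha^{n}(a)b$. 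Rewriting $\alpha^{n}=\omega_s^{\,n}=\omega_{s^n}$ gives the stated conclusion.

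For part (2) I would prove the one-step descent $a\alpha(b)=0\Rightarrow ab=0$ by applying rigidity to $x=ba$: since $\alpha(ba)=\alpha(b)\alpha(a)$ and $a\alpha(b)=0$, we get $x\alpha(x)=b\,\bigl(a\alpha(b)\bigr)\,\alpha(a)=0$, so $ba=0$ and hence $ab=0$; the hypothesis $\alpha(a)b=0\Rightarrow ab=0$ is handled symmetrically (apply rigidity to $ab$ and use $b\alpha(a)=0$). Iterating this descent $k$ times, peeling off one $\alpha$ at a time, converts $a\omega_{s^k}(b)=a\alpha^{k}(b)=0$ into $ab=0$, which is the desired implication.

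The only genuinely delicate point is the initial choice of auxiliary element fed into rigidity. The computations succeed precisely because $\alpha$ is a ring homomorphism: a product $ba$ (or $ab$) that reducedness has rearranged into the correct order is carried by $\alpha$ to $\alpha(b)\alpha(a)$ (resp. $\alpha(a)\alpha(b)$), manufacturing the zero factor needed inside $x\alpha(x)$. Everything else is routine bookkeeping with the reduced-ring identities, the inductions on $n$ and $k$, and the substitution $\omega_{s^n}=\omega_s^{\,n}$; so I expect no real obstacle beyond guessing these auxiliary elements correctly.
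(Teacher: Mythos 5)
Your proof is correct and follows essentially the same route as the paper, which gives no details of its own but simply defers to the classical rigid-endomorphism argument of Hong et al.\ \cite[Lemma 4]{ore}: fixing $s$, noting $\omega_{s^n}=\omega_s^n$ and that $S$-rigidity makes each $\omega_s$ a rigid endomorphism of a reduced ring, and then applying rigidity to the auxiliary elements $a\omega_s(b)$, $b\omega_s(a)$, $ba$, $ab$ while using reducedness to transpose zero products is exactly that argument. Your writeup in fact supplies the details the paper omits (and observes, correctly, that in part (2) a single one of the two vanishing hypotheses already suffices), so nothing further is needed.
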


\begin{lemma}\label{kav}
Let $R$ be an $S$-rigid ring. If $\omega_s(ab)=0$ with $s\in S$, then
$\omega_{ss'}(a)\omega_s(b)=0$ for any $s'\in S$ such that
$ss'=s's$.
\end{lemma}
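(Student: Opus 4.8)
The plan is to reduce the statement to a single application of Lemma~\ref{jav}(1), after rewriting the hypothesis as a genuine zero-product in $R$ and recording the commutation of the endomorphisms attached to $s$ and $s'$. First I would use that $\omega_s$ is a ring endomorphism to expand the hypothesis $\omega_s(ab)=0$ as $\omega_s(a)\,\omega_s(b)=0$. Writing $p=\omega_s(a)$ and $q=\omega_s(b)$, the assumption becomes simply $pq=0$; note that we never need to invert or cancel $\omega_s$, so its possible non-injectivity causes no trouble.

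Next I would apply Lemma~\ref{jav}(1) to the pair $(p,q)$ with the monoid element $s'$ and exponent $n=1$: since $pq=0$, the lemma yields $\omega_{s'}(p)\,q=0$ (together with $p\,\omega_{s'}(q)=0$, which we will not need). It then remains only to identify $\omega_{s'}(p)\,q$ with the quantity appearing in the conclusion.

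For this I would invoke that $\omega\colon S\to\End(R)$ is a monoid homomorphism together with the commutation hypothesis $ss'=s's$. These give $\omega_{s'}\circ\omega_s=\omega_{s's}=\omega_{ss'}$, hence $\omega_{s'}(p)=\omega_{s'}(\omega_s(a))=\omega_{ss'}(a)$. Substituting, $\omega_{ss'}(a)\,\omega_s(b)=\omega_{s'}(p)\,q=0$, which is exactly the claim.

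The argument is short, and I do not expect a serious obstacle: the only points needing care are that Lemma~\ref{jav}(1) is being used with the single element $s'$ at exponent $1$, so it applies verbatim to the already-transformed elements $p$ and $q$, and that the equality $\omega_{s'}\circ\omega_s=\omega_{ss'}$ genuinely requires $ss'=s's$, which is precisely the commutativity assumption in the statement. Everything else is the functoriality of $\omega$ and the fact that $\omega_s$ respects multiplication.
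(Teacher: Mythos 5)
Your proof is correct, but it follows a genuinely different route from the paper's. The paper argues directly from the definition of rigidity: setting $x=\omega_s(b)\omega_{ss'}(a)$, it computes
\begin{align*}
x\,\omega_{s'}(x)=\omega_s(b)\omega_{ss'}(a)\omega_{s's}(b)\omega_{s'ss'}(a)
=\omega_s(b)\,\omega_{s'}\bigl(\omega_s(ab)\bigr)\,\omega_{s'ss'}(a)=0,
\end{align*}
where the middle collapse uses $ss'=s's$ to merge $\omega_{ss'}(a)\omega_{s's}(b)$ into $\omega_{s's}(ab)$; rigidity of $\omega_{s'}$ then forces $x=0$, and finally $(\omega_{ss'}(a)\omega_s(b))^2=0$ together with reducedness gives the claim. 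You instead rewrite the hypothesis as $\omega_s(a)\omega_s(b)=0$ and quote Lemma~\ref{jav}(1) with $n=1$ for the pair $(\omega_s(a),\omega_s(b))$ and the element $s'$, then identify $\omega_{s'}(\omega_s(a))=\omega_{s's}(a)=\omega_{ss'}(a)$, which uses the commutation hypothesis in exactly the same single place the paper does. Both arguments are sound. Yours is shorter and more modular: it reuses a lemma the paper has already stated, lands on the product in the correct order directly, and avoids the final squaring-plus-reducedness step. The paper's version has the virtue of being self-contained, which matters slightly here because Lemma~\ref{jav} is never actually proved in the paper (its proof is only referenced to Hong et al.); indeed the paper's inline computation is essentially the same rigidity trick that underlies Lemma~\ref{jav}(1), so in substance the paper re-derives the special case of that lemma that you simply cite.
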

\begin{proof}
One can see that
\begin{align*}
\omega_s(b)\omega_{ss'}(a)\omega_{s'}(\omega_s(b)\omega_{ss'}(a))=&\omega_s(b)\omega_{ss'}(a)\omega_{s'}(\omega_s(b))\omega_{s'}(\omega_{ss'}(a))\nonumber\\
=&\omega_s(b)\omega_{ss'}(a)\omega_{s's}(b)\omega_{s'ss'}(a).
\end{align*}
Since $ss'=s's$, we have
\begin{align*}
\omega_s(b)\omega_{ss'}(a)\omega_{s's}(b)\omega_{s'ss'}(a) =\omega_s(b)\omega_{s's}(ab)\omega_{s'ss'}(a)=\omega_s(b)\omega_{s'}(\omega_s(ab))\omega_{s'ss'}(a)=0.
\end{align*}
Since $R$ is $S$-rigid, $\omega_s(b)\omega_{ss'}(a)=0$ so
$(\omega_{ss'}(a)\omega_s(b))^2=0$, and
$\omega_{ss'}(a)\omega_s(b)=0$, as $R$ is reduced.
\end{proof}

 Recall that an ideal $I$ of $R$ is called
$\alpha$-ideal, if $\alpha (I) \subseteq I$. The ideal $I$ is called
$\alpha$-invariant if $\alpha^{-1}(I) = I$. R. Mazurek and M. Zimbowski in \cite{zim} proved  that, if $R$ is a domain  and $\omega_s$  is an injective endomorphism of  $R$ for each $s\in S$, then  $R$ satisfies \emph{ACCPL}  if and only if $R[[S,\omega]]$ is a domain and satisfies \emph{ACCPL}.\par
  In the following result we consider the case that $R$ is not assumed to be a domain. There are various monoids $S$ and $S$-rigid rings which are not domains and satisfy \emph{ACCPR}.

\begin{theorem}\label{reza}
Let $S$ be an Artinian strictly totally ordered monoid and $\omega_s$  be an
automorphism of $R$, for each $s\in S$. If
$R$ is an $S$-rigid ring satisfying ACCPR,  then $R[[S,\omega]]$ satisfies ACCPR.
\end{theorem}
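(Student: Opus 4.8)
The plan is to control a chain of principal right ideals in $A:=R[[S,\omega]]$ by means of two monotone invariants attached to each nonzero element: the minimal exponent occurring in its support, and the right ideal that the corresponding leading coefficient generates in $R$. First I would record two structural facts. Since $(S,\le)$ is an Artinian strictly totally ordered monoid it is an a.n.u.p.-monoid, so by \cite[Theorem 4.12]{unified} the $S$-rigidity of $R$ makes $A$ reduced; moreover, if some $s<1$ held then $s>s^2>s^3>\cdots$ would be an infinite strictly descending chain, so in fact $1=\min S$ and hence $1\le\pi(h)$ for every nonzero $h$, where I write $\pi(h):=\min\supp(h)$ and $c(h):=h(\pi(h))$ for the leading coefficient. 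Because $S$ is strictly and totally ordered, for nonzero $f,g$ the exponent $\pi(f)\pi(g)$ is attained in $fg$ only by the pair $(\pi(f),\pi(g))$, so $\pi(fg)\ge\pi(f)\pi(g)$, with equality precisely when $c(f)\omega_{\pi(f)}(c(g))\neq0$, in which case this element equals $c(fg)$.

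Now suppose $f_1A\subseteq f_2A\subseteq\cdots$; after discarding an initial segment of zero ideals I may assume every $f_n\neq0$ and write $f_n=f_{n+1}h_n$. From $\pi(f_n)\ge\pi(f_{n+1})\pi(h_n)\ge\pi(f_{n+1})$ the sequence $(\pi(f_n))_n$ is weakly decreasing, so by the Artinian hypothesis it stabilizes: there is $N$ with $\pi(f_n)=t$ for all $n\ge N$. For such $n$ the equality $\pi(f_{n+1}h_n)=\pi(f_{n+1})$ forces $\pi(h_n)=1$ (cancel using the strict order and $1=\min S$) and, crucially, forces the leading coefficient to survive, giving $c(f_n)=c(f_{n+1})\,\omega_t(c(h_n))\neq0$. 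Hence $c(f_N)R\subseteq c(f_{N+1})R\subseteq\cdots$ is an ascending chain of principal right ideals of $R$, which stabilizes at some $M\ge N$ because $R$ satisfies ACCPR.

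It then remains to prove the key implication: for $n\ge M$, the equalities $\pi(f_n)=\pi(f_{n+1})$ and $c(f_n)R=c(f_{n+1})R$ together yield $f_nA=f_{n+1}A$, and this is where I expect the main obstacle to lie. The containment $f_nA\subseteq f_{n+1}A$ is automatic, so the task is to produce $k\in A$ with $f_{n+1}=f_nk$. Writing $g=f_{n+1}$ and $f=f_n=g h_n$ with $\pi(h_n)=1$, the hypothesis $c(f)R=c(g)R$ lets me match the leading term by solving $c(g)=c(f)\omega_t(\kappa)$ for $\kappa$, using that $\omega_t$ is an automorphism, and then I would construct $k$ by recursion along the well-ordered support of the putative solution. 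The delicate point is that $R$ is not a domain, so the factor $c(h_n)$ need not be a unit; the resolution is that in the reduced ring $A$ its non-invertible part is annihilated by $g$. This is exactly where the rigidity lemmas intervene: Lemma \ref{jav} converts each relation $ab=0$ into $a\omega_s(b)=0$ and back, so that $\mathrm{r.ann}(c(g))$ is stable under the twists $\omega_s$ appearing in the convolution, while reducedness (reversibility together with semicommutativity) keeps every successive remainder inside $c(g)R=c(f)R$. Granting this, the recursion is solvable at each exponent, $k$ exists, the chain stabilizes, and $A$ satisfies ACCPR. I anticipate that verifying the remainders stay in $c(f)R$ at every stage — that is, that the zero-divisor behaviour of $R$ is completely absorbed by $\mathrm{r.ann}(c(g))$ through the rigidity lemmas — will be the technical heart of the proof.
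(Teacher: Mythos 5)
Your preliminary reductions are correct: since $S$ is Artinian and strictly totally ordered, indeed $1=\min S$, $\pi(fg)\ge\pi(f)\pi(g)$ with $(fg)(\pi(f)\pi(g))=c(f)\omega_{\pi(f)}(c(g))$, and along a chain $f_nA\subseteq f_{n+1}A$ the orders $\pi(f_n)$ stabilize at some $t$, after which $c(f_n)=c(f_{n+1})\omega_t(c(h_n))$ and the chain $c(f_N)R\subseteq c(f_{N+1})R\subseteq\cdots$ stabilizes by ACCPR. But the step you defer as ``the technical heart'' --- that $\pi(f_n)=\pi(f_{n+1})$ together with $c(f_n)R=c(f_{n+1})R$ forces $f_nA=f_{n+1}A$ --- is not just hard, it is false for reduced rings that are not domains, so the proof cannot be completed along these lines. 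Take $R=k[a,b]/(ab)$ (commutative Noetherian, reduced, hence $S$-rigid for trivial $\omega$, and ACCPR), $A=R[[x]]$, and set $g=a+bx$, $h=1+b$, $f=gh=a+(b+b^2)x$ (using $ab=0$). Then $\pi(f)=\pi(g)=0$ and $c(f)=c(g)=a$, so both of your invariants agree, yet $g\notin fA$: the equation $fk=g$ forces $a(k(0)-1)=0$, so $k(0)=1+c$ with $c\in\Ann(a)=b\,k[b]$, and comparing coefficients of $x$ (using $a\,k[a]\cap b\,k[b]=0$) forces $(1+b)c=-b$ in the polynomial ring $k[b]$, which has no solution. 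Your proposed rescue --- that rigidity/reducedness makes $g$ annihilate the non-unit part of $c(h)$ --- fails in the same example, since $gb=b^2x\neq 0$. (There is of course no infinite strictly ascending chain in this Noetherian $A$; the point is that your stabilization criterion for consecutive terms is invalid, so it cannot be what terminates a hypothetical chain.)

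This zero-divisor obstruction is exactly what the paper's proof is built to circumvent, and the two arguments differ precisely there. The paper follows Frohn's method: it attaches to each $f$ the coefficient ideal $I_f=\{g(\pi(g))\mid g\in AfA\}\cup\{0\}$, chooses among all non-stabilizing chains a maximal annihilator $P=\Ann_R(\bigcup_{i\ge 1}I_{f_i})$, proves that $P$ is a completely prime, $\omega_s$-invariant ideal, and then passes to $T=(R/P)[[S,\bar{\omega}]]$, where the Mazurek--Ziembowski domain theorem applies; the contradiction is then pulled back to $A$ by a support argument using Lemma \ref{kav}. In effect, the leading-coefficient analysis you outline is the argument that works \emph{after} this reduction to a domain, not before it. If you want to keep your direct route, you must first explain how to eliminate examples like the one above, which essentially amounts to reconstructing the passage to a completely prime quotient via maximal annihilators --- note also that this part of the argument rests on an ACC condition on annihilators of $R$, which your outline never invokes.
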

\begin{proof}
We use the method employed by Frohn \cite[Theorem 4.1]{frohnACC}. For each $f \in
A=R[[S,\omega]]$ let $I_f=\{g(\pi(g))|g\in AfA\}\cup\{0\}$.
 It is easy to see that $I_f$ is an ideal of $R$. Next assume on the contrary that there exists a non stabilizing chain of principal right ideals of $A$. So the set
\begin{align*}
 M=\left\{\Ann_R(\bigcup_{i\ge1}I_{g_i}) | g_1A\subseteq g_2A \subseteq \cdots  \text{is a non stabilizing chain of principal right
ideals in } A\right\}
\end{align*}
is nonempty. Since R is $S$-rigid, $R$ is reduced. So it is easy to
see that $R$ satisfies the \emph{ACC} on left annihilators. Thus $M$ has a
maximal element. Let $P=\Ann_R(\bigcup_{i\ge 1}I_{f_i})$ be a
maximal element of $M$, where $f_1A\subseteq f_2A\subseteq \cdots$
is a nonstabilizing chain in $A$.

We show that $P$ is a completely
prime ideal of $R$. Assume that $a \notin P , b\notin P$ and $ab\in P$. Since $R$ is $S$-rigid,  using Lemma
\ref{kav} we can see that $a\in \Ann_R(\bigcup_{i\ge1}I_{bf_i})$. Also we have $P\subseteq \Ann_R(\bigcup_{i\ge1}I_{bf_i})$. So
the chain $bf_1A \subseteq bf_2A \subseteq \cdots$ stabilizes. Then there exists a positive integer $t$ such that
for each $n\ge t$, $bf_{n+1}=bf_nh_n$ for some $h_n\in A$. For each positive integer $n$, there
exists $v_n\in A$ such that $f_n=f_{n+1}v_n$. Thus for each $n\ge t$, $bf_{n+1}(1- v_nh_n)=0$.
Let $q_i=f_i(1-v_{i-1}h_{i-1})$, for each $i>t$. Since $R$ is reduced, $b\in \Ann_R(\bigcup_{i\ge1}Iq_i)$ and $P\subseteq_R(\bigcup_{i\ge1}Iq_i)$. Then the chain $q_1A\subseteq q_2A\subseteq \cdots$ stabilizes. Thus there exists
a positive integer $t'$ such that for each $m\ge t'$, $q_{m+1}=q_ml_m$ for some $l_m\in A$.
Then $f_{m+1}(1-g_mh_m)=f_m(1-g_{m-1}h_{m-1})l_m$ and so $f_{m+1}=f_ml_m+f_m(1-g_{m-1}h_{m-1})l_m$. Thus we have a contradiction $f_{m+1}\in f_mA$. Which shows that $P$ is a completely prime ideal of $R$.

Since $R$ is $S$-rigid and $P=\Ann_R(\bigcup_{i\ge1}I_{f_i})$, using
Lemma \ref{kav}, we claim that $P$ is $\omega_s$-invariant for any
$s \in S$. Let $r\in \omega_s^{-1}(P)$. So
$\omega_s^{-1}(p)=r$ for some $p\in P$, and so
$\omega_s(r)(\bigcup_{i\ge1}(I_{f_i}))=0$. Thus $\omega_s(r)u=0$ for
every $u\in \bigcup_{i\ge1}I_{f_i}$. Hence
$\omega_s(ru)=\omega_s(r)\omega_s(u)=0$, and since $\omega_s$ is
injective, $ru=0$. Thus  $r\in \Ann_R(\bigcup_{i\ge1}I_{f_i})=P$
which means that $\omega_s^{-1}(P)\subseteq P$.

Now let $T=(R/P)[[S,\bar{\omega}]]$. Since $R$ is an \emph{ACCPR}-ring and $P$ is a
completely prime ideal of $R$, it follows that $R/P$ is an
\emph{ACCPR}-domain. On the other hand $\omega_s$ is an automorphism, thus
it preserves nonunit element of $R$ for every $s\in S$. Hence $T$ is
an \emph{ACCPR}-domain.

For each positive integer $i$, $\bar{f_i}=\bar{f_{i+1}}\bar{g_i}$,
where $\bar{f}(s)=f(s)+P\in T$. If $\bar{f_i}=\bar{0}$ for some $i$,
then $f_i(\pi(f_i))\in P$. We claim that $f_i(\pi(f_i))^2=0$. One
can see that $f_i\pi(f_i)\in P$ which means that
$f_i(\pi(f_i))(\bigcup_{i\ge1}I_{f_i})=0$. Thus
$f_i(\pi(f_i))f(\pi(1f_i1))=0$ which gives $f_i(\pi(f_i))^2=0$.
Since $R$ is $S$-rigid, $R$ is reduced and $f_i(\pi(f_i))=0$ which
is a contradiction because $\pi(f_i)\in \supp(f_i)$. So for each
$i$, $\bar{f_i}\neq \bar{0}$ and hence $\bar{g_i}\neq \bar{0}$. By
 \cite[Proposition 2.1]{zim}, there exists a positive integer $j$
such that for each $m\ge j$, $\bar{g_m}$ is invertible in $T$. Then
there is some $\bar{h}\in T$ such that
$\bar{g_m}\bar{h}=\bar{h}\bar{g_m}=\bar{1}$.  Hence
$\bar{g_m}\bar{h}-\bar{1}=\bar{0}$. So it is easy to see that for
each $s'\in \supp(b)$, we have $b(s')=(g_mh-1)(s')\in P$. Since
$b(s')\in P$, for each $i$, $b(s')f_i(\pi(f_i))=0$ which means that
$b(s')f_{m+1}(\pi(f_{m+1}))=0$ and hence
$f_{m+1}\big(\pi(f_{m+1})\big)b(s')=0$. By Lemma  \ref{kav}, we have
$f_{m+1}(\pi(f_{m+1}))\omega_{\pi(f_{m+1})}(b(s'))=0$. Now, define
\begin{align*}
B=\{ s | s > \pi(f_{m+1}) , f_{m+1}(s)\omega_s(b(s'))\neq0 \}.
\end{align*}
If $B=\emptyset$, then $f_{m+1}b(s')=0$ and so $f_{m+1}(g_mh-1)=0$. If $B\neq \emptyset$, $B$ has a minimum element $s''$, since $S$ is Artinian. This means that $f_{m+1}(s'')\omega_{s''}(b(s'))\neq0$
 and hence $f_{m+1}(s'')b(s')\neq 0$. On the other hand, we know that $b\in P$ and $f_{m+1}b \in Af_{m+1}A$. We have also $b(s')\in P$, so
\begin{align*}
b(s')\bigg(\big(f_{m+1}b(s')\big)\big(\pi(f_{m+1}b(s'))\big)\bigg)=b(s')\bigg(f_{m+1}(s'')\omega_{s''}\big(b(s')\big)\bigg)=0.
\end{align*}
Since $A$ is reduced, $f_{m+1}(s'')\omega_{s''}(b(s'))b(s')=0$. By
Lemma \ref{kav},
$f_{m+1}(s'')\omega_{s''}(b(s'))\omega_{s''}(b(s'))=0$. Hence
$f_{m+1}(s'')\omega_{s''}(b(s')^2)=0$. Thus by Lemma \ref{kav},
$f_{m+1}(s'')b^2(s')=0$. But $A$ is reduced, so we get
$f_{m+1}(s'')b(s')=0$ which contradicts to the definition of $s''$.
Hence $B=\emptyset$, which yields $f_{m+1}(g_mh-1)=0$. So
$f_{m+1}=f_{m+1}g_mh=f_mh$ and thus the chain $f_1A \subseteq f_2A
\subseteq f_3A \subseteq \cdots $ will stabilize, which is a
contradiction. Thus the result follows.
\end{proof}

 The following example shows that, in Theorems \ref {reza},  \ref{domar} and \ref{rigidar}, the
Artinian condition on the monoid $S$ is not superfluous.

\begin{example}
 Let $\mathbb{Z}_n$ be the ring of integers module $n$. Then $\mathbb{Z}_n$ satisfies \emph{ACCP} and archimedean condition and also  it satisfies the \emph{ACC} on right annihilators. Now consider $A=\mathbb{Z}_n[[\mathbb{Q},id_{\mathbb{Z}}]]$.
We have an ascending chain as follows:
\begin{align}
\langle e_1 \rangle \subsetneq \langle e_{\frac{1}{2}} \rangle \subsetneq \langle e_{\frac{1}{4}} \rangle \subsetneq \langle e_{\frac{1}{8}} \rangle \subsetneq  \cdots
\end{align}which will not be stabilized,  where $e_s$ is defined in \ref{e_s}. So $\mathbb{Z}_n[[\mathbb{Q},id_{\mathbb{Z}}]]$ does not satisfy \emph{ACCP}. Also one can see easily that $e_1\in Ae_{(\frac{1}{2})^n}$ for each $n\in \mathbb{N}$. Hence
\begin{align*}
\bigcap_{n\in \mathbb{N}}Ae_{(\frac{1}{2})^n}\neq \{0\}.
\end{align*}
So $A$ is not archimedean. Note that this example does not
contradicts  with \cite[Theorems 3.1, 3.13]{zim}, as
$(S,+)=\mathbb{Q}^+\cup \{0\}$ does not satisfy \emph{ACCP}.

Now, consider the sequences $s_n=\frac{1}{2^n}$ and $r_n=\frac{1}{2^{n+1}}$. We have the recursive formula $s_n=r_n + s_{n+1}$. So $r_n$ has to be unit. But the units of $S$ is $U(S)=\{0\}$.
So $r_n \notin U(S)$  for any $n\in \mathbb{N}$.
Thus according to \cite[Proposition 2.1]{zim}, $S$ does not satisfy \emph{ACCP}. So $S$ doesn't have the necessary condition of \cite[Theorems 3.1, 3.13]{zim}. Indeed, we can not apply these Theorems for $S=\mathbb{Q}^+ \cup \{0\}$.
\end{example}

The following example,
(See \cite[Example] {lantz}), shows that the $S$-rigid condition in \ref{reza} is not superfluous.

\begin{example}
 Let $k$ be a field and $A_1, A_2, \cdots$ be indeterminates over $k$, and set
\begin{align}
S = \frac{k[A_1, A_2, \cdots]}{(\{A_n(A_n -A_{n-1}) : n \geq 2\})k[A_1, A_2, \cdots]}.
\end{align}
Denote by $a_n$ the image of $A_n$ in $S$ and by $R$ the localization of $S$ at the ideal $(a_1, a_2, \cdots )S$.
Note that $S$ is a limit of the rings $S_n$ where $S_1$ is $S_1 := k[a_1]$ and
\begin{align*}
S_n := S_{n-1}[a_n] =
\frac{S_{n-1}[A_n]}{A_n(a_{n-1} -A_n)S_{n-1}[A_n]}\quad for \quad n \geq 2.
\end{align*}
Heinzer and Lantz in \cite{lantz} proved that $R$ satisfies \emph{ACCP} but the ring $R[x]$, does not satisfy \emph{ACCP}. Note that in $S$
we have
\begin{align*}
a_3^2(a_1 - a_2)^2 = a_3a_2(a_1 - a_2)^2 = 0,
\end{align*}
but $a_3(a_1 - a_2) \neq 0$. Thus $S$ is
not reduced and since $R$ contains (an isomorphic copy of) $S$ (see \cite{lantz}), $R$ is not
reduced. So the $S$-rigid condition in \ref{reza} is not superfluous.
\end{example}

We now consider a case for which $S$ is not positive. Let $R$ be a ring with a monomorphism $\alpha$.
  We denote $R[x;\alpha]$ the
Ore extension whose elements are the polynomials
$\Sigma_{i=0}^{n}r_{i}x^{i}$, $r_{i}\in R$, where the addition is
defined as usual and the multiplication subject to the relation
$xa=\alpha (a)x $ for any $a\in R$. The set $\{x^j\}_{j\geq0}$ is
easily seen to be a left Ore subset of $R[x\ ;\alpha]$, so that one
can localize $R[x ;\alpha]$ and form the skew Laurent polynomial
ring $R[x,x^{-1}\ ;\alpha]$. Elements of $R[x,x^{-1}\ ;\alpha]$ are
finite sums of elements of the form $x^{-j}rx^i$ where $r\in R$ and
$i$ and $j$ are nonnegative integers.\\

 Now we consider D.A. Jordan's construction of the ring
$A(R,\alpha)$ (See [13], for more details). Let $A(R,\alpha)$ be the
subset $\{x^{-i}rx^i\ |\ r\in R\ ,\ i\geq0\}$ of the skew Laurent
polynomial ring $R[x,x^{-1};\alpha]$. For each $j\geq0$,
$x^{-i}rx^i=x^{-(i+j)}\alpha^j(r)x^{(i+j)}$. It follows that the set
of all such elements forms a subring of $R[x,x^{-1};\alpha]$ with
$x^{-i}rx^i+x^{-j}rx^j=x^{-(i+j)}(\alpha^j(r)+\alpha^i(s))x^{(i+j)}$ and\\
$(x^{-i}rx^i)(x^{-j}sx^j)=x^{-(i+j)}\alpha^j(r)\alpha^i(s)x^{(i+j)}$
for $r,s\in R$ and $i,j\geq0$. Note that $\alpha$ is actually an
automorphism of $A(R,\alpha)$. We have $R[x,x^{-1};\alpha]\simeq
A(R,\alpha)[x,x^{-1};\alpha]$, by way of an isomorphism which maps
$x^{-i}rx^{j}$ to $\alpha^{-i}(r)x^{j-i}$.\\

 Now we examine the
\emph{ACCPL} condition for the skew Laurent polynomial ring
$R[x,x^{-1};\alpha]$. First, we recall the following propositions
which are proved in \cite{Jordan,Weakly}.
\begin{proposition}
If $R$ is a domain and $\alpha$ is a monomorphism of $R$, then $A(R,\alpha)$ is a domain.
\end{proposition}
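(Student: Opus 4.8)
The plan is to use the explicit description of $A(R,\alpha)$ recalled just above the statement: every element is of the form $x^{-i}rx^i$ with $r\in R$ and $i\ge 0$, and addition and multiplication are given by the displayed formulas. Since $A(R,\alpha)$ is a unital subring of $R[x,x^{-1};\alpha]$ containing $1=x^{0}1x^{0}\neq 0$, it suffices to prove that the product of two nonzero elements is again nonzero.

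First I would pin down the zero test: for $r\in R$ and $i\ge0$ one has $x^{-i}rx^i=0$ if and only if $r=0$. Indeed, using the identity $x^{-i}rx^i=x^{-(i+j)}\alpha^{j}(r)x^{(i+j)}$ valid for every $j\ge 0$, an element represented by $r$ at level $i$ vanishes precisely when $\alpha^{j}(r)=0$ for some $j$; as $\alpha$ is a monomorphism this forces $r=0$. Equivalently, $R[x;\alpha]$ is a domain because $R$ is a domain and $\alpha$ is injective, the powers of $x$ are regular, and localizing at the left Ore set $\{x^{j}\}$ keeps $R[x,x^{-1};\alpha]$ a domain, whence its subring $A(R,\alpha)$ has no zero divisors; I will nonetheless keep the argument self-contained and rely only on the zero test above.

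With the zero test in hand the main computation is immediate. Take two nonzero elements $u=x^{-i}rx^i$ and $v=x^{-j}sx^j$, so that $r\neq0$ and $s\neq0$. The multiplication rule recalled above gives
\[
uv=x^{-(i+j)}\,\alpha^{j}(r)\,\alpha^{i}(s)\,x^{(i+j)}.
\]
Because $\alpha$ is injective we have $\alpha^{j}(r)\neq0$ and $\alpha^{i}(s)\neq0$, and because $R$ is a domain the product $\alpha^{j}(r)\alpha^{i}(s)$ is nonzero. Applying the zero test at level $i+j$ yields $uv\neq0$, so $A(R,\alpha)$ is a domain. There is no serious obstacle here: the only point requiring care is the reduction of an arbitrary pair of elements to a common level $i+j$ together with the verification of the zero test, both of which rest solely on the injectivity of $\alpha$, while the rest is a direct application of the stated product formula.
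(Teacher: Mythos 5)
Your proof is correct: the reduction of two elements to the common level $i+j$, the zero test $x^{-i}rx^i=0\iff r=0$ (via injectivity of $\alpha$), and the product formula together give exactly what is needed, and this is the standard argument (equivalently, $A(R,\alpha)=\bigcup_{i\ge 0}x^{-i}Rx^{i}$ is a directed union of subrings each isomorphic to $R$, hence a domain). Note that the paper itself offers no proof of this proposition, deferring instead to \cite{Jordan,Weakly}, so your self-contained computation is a welcome filling-in of that citation rather than a departure from the paper's method.
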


\begin{proposition}\label{pro 2}
If $\alpha$ is monomorphism and $R$ is $\alpha$-rigid, then  $A(R,\alpha)$ is $\alpha$-rigid.
\end{proposition}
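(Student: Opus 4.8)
The plan is to exploit the explicit arithmetic of $A(R,\alpha)$ recorded just above the statement, and to reduce the $\alpha$-rigidity of $A(R,\alpha)$ to that of $R$ by a single direct computation. First I would pin down how the automorphism $\alpha$ of $A(R,\alpha)$ acts on a typical element. Every element of $A(R,\alpha)$ has the form $x^{-i}rx^i$ with $r\in R$ and $i\ge 0$ (any two such elements can be brought to a common exponent via $x^{-i}rx^i=x^{-(i+j)}\alpha^j(r)x^{i+j}$, which is exactly what makes the additive formula well defined), so it suffices to describe $\alpha$ on these. The natural choice is $\alpha(x^{-i}rx^i)=x^{-i}\alpha(r)x^i$, and I would check that this is well defined and coincides with the automorphism of $A(R,\alpha)$ referred to in the construction: it respects the given addition and multiplication formulas, hence is a ring homomorphism, it is surjective because $x^{-i}rx^i=\alpha(x^{-(i+1)}rx^{i+1})$, and it is injective because $\alpha$ is a monomorphism of $R$.

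Next I would take an arbitrary $a=x^{-i}rx^i\in A(R,\alpha)$ and compute $a\alpha(a)$. Using $\alpha(a)=x^{-i}\alpha(r)x^i$ together with the multiplication rule $(x^{-i}rx^i)(x^{-i}sx^i)=x^{-2i}\alpha^i(r)\alpha^i(s)x^{2i}$ with $s=\alpha(r)$, one obtains
\[ a\alpha(a)=x^{-2i}\alpha^i(r)\alpha^{i+1}(r)x^{2i}=x^{-2i}\alpha^i\big(r\alpha(r)\big)x^{2i}, \]
where the last equality uses that $\alpha^i$ is a ring homomorphism. Since an element $x^{-k}tx^k$ of $A(R,\alpha)$ vanishes if and only if $t=0$ in $R$ (because $x$ is a unit of $R[x,x^{-1};\alpha]$, so $tx^k=0$ forces $t=0$), the hypothesis $a\alpha(a)=0$ forces $\alpha^i\big(r\alpha(r)\big)=0$.

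Finally, because $\alpha$ is a monomorphism, $\alpha^i$ is injective, so $r\alpha(r)=0$ in $R$; the $\alpha$-rigidity of $R$ then gives $r=0$, whence $a=x^{-i}rx^i=0$. This establishes $a\alpha(a)=0\Rightarrow a=0$, i.e. $A(R,\alpha)$ is $\alpha$-rigid. I expect the only genuine subtlety to lie in the first paragraph: correctly identifying the action of $\alpha$ on $A(R,\alpha)$ and justifying that an arbitrary element can be normalized to the single-exponent form $x^{-i}rx^i$, so that the one-line computation of $a\alpha(a)$ applies uniformly. Once that normalization is secured, the remainder is a mechanical transfer of the $\alpha$-rigidity of $R$ through the injective maps $\alpha^i$.
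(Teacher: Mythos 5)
Your proof is correct, but note that the paper itself contains no proof of this proposition: it is one of the facts explicitly ``recalled'' from \cite{Jordan} and \cite{Weakly}, so your write-up is not an alternative to an internal argument but rather supplies the verification the paper defers to the literature. The substance checks out. The induced endomorphism of $A(R,\alpha)$ is conjugation by $x$, which on elements of $A(R,\alpha)$ acts by $x^{-i}rx^i\mapsto x^{-i}\alpha(r)x^i$; your surjectivity identity $\alpha\bigl(x^{-(i+1)}rx^{i+1}\bigr)=x^{-(i+1)}\alpha(r)x^{i+1}=x^{-i}rx^i$ is exactly the defining relation of the construction. The product formula with $j=i$ and $s=\alpha(r)$ gives $a\alpha(a)=x^{-2i}\alpha^i\bigl(r\alpha(r)\bigr)x^{2i}$ as you state, and since $x$ is invertible in $R[x,x^{-1};\alpha]$, vanishing of $a\alpha(a)$ forces $\alpha^i\bigl(r\alpha(r)\bigr)=0$, whence $r\alpha(r)=0$ by injectivity of $\alpha^i$ and $r=0$ by rigidity of $R$. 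Two small remarks: first, the ``normalization'' you flag as the main subtlety is vacuous here, since by definition every element of $A(R,\alpha)$ already has the form $x^{-i}rx^i$, and $a$ and $\alpha(a)$ automatically share the same exponent $i$, so no common-denominator step is ever needed; second, your claim that $x^{-k}tx^k=0$ forces $t=0$ tacitly uses that $R$ embeds in the localization $R[x,x^{-1};\alpha]$, which is where injectivity of $\alpha$ enters again (in $R[x;\alpha]$ one has $x^jt=\alpha^j(t)x^j$, so no nonzero element of $R$ is killed by a power of $x$); it would be worth one sentence to make that explicit, but it is not a gap.
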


\begin{proposition}\label{pro 3}
If $R$ is an ACCPL(R)-ring, then $A(R,\alpha)$ is an ACCPL(R)-ring.
\end{proposition}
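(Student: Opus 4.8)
The plan is to exploit the fact that $A(R,\alpha)$ is nothing but the directed union of the subrings $R_m=\{x^{-m}rx^m\mid r\in R\}$, together with the automorphism of $A(R,\alpha)$ induced by $\alpha$. First I would record the structural facts already assembled above: the map $\phi_m\colon R\to A(R,\alpha)$, $\phi_m(r)=x^{-m}rx^m$, is an injective ring homomorphism onto $R_m$ (so $R_m\cong R$); the identity $x^{-m}rx^m=x^{-(m+1)}\alpha(r)x^{m+1}$ gives $\phi_m=\phi_{m+1}\circ\alpha$ and hence $R_0\subseteq R_1\subseteq R_2\subseteq\cdots$ with $A(R,\alpha)=\bigcup_{m\ge0}R_m$; and $\alpha$ extends to an automorphism $\bar\alpha$ of $A(R,\alpha)$ that lowers levels, $\bar\alpha(\phi_m(r))=\phi_{m-1}(r)$ for $m\ge1$. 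In particular every principal left ideal of $A(R,\alpha)$ has the form $A(R,\alpha)\,\phi_m(r)=\bar\alpha^{-m}\big(A(R,\alpha)\,\phi_0(r)\big)$ for some $r\in R$ and $m\ge0$, so the automorphism lets me reduce, ideal by ideal, to generators lying in $R_0\cong R$.

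Next, starting from an ascending chain of principal left ideals $A(R,\alpha)a_1\subseteq A(R,\alpha)a_2\subseteq\cdots$, I would write $a_p=\phi_{m_p}(r_p)$ and translate each containment into $R$. From $a_p\in A(R,\alpha)a_{p+1}$ there is $c_p=\phi_{j_p}(s_p)$ with $a_p=c_pa_{p+1}$, and unravelling this through the multiplication rule $(x^{-i}rx^i)(x^{-j}sx^j)=x^{-(i+j)}\alpha^j(r)\alpha^i(s)x^{i+j}$ yields, after bringing both sides to a common level, an equation of the shape $\alpha^{u}(r_p)=s_p\,\alpha^{v}(r_{p+1})$ in $R$ for suitable nonnegative exponents $u,v$. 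Thus, at the level of the coefficients, the $A(R,\alpha)$-containment is controlled by a left-divisibility relation in $R$ that is stable under applying $\alpha$, since $\alpha$ is an injective endomorphism and intertwines left multiplication with its own image.

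I would then package these relations into an honest ascending chain of principal left ideals of $R$: using the $\alpha$-stability of the divisibility relation and the injectivity of $\alpha$, I normalize the exponents along the chain (for instance by replacing the generators with suitable $\alpha$-powers, or by passing to a cofinal subsequence) so that the coefficients produce a chain $R\rho_1\subseteq R\rho_2\subseteq\cdots$ of principal left ideals of $R$. Since $R$ satisfies \emph{ACCPL(R)}, this chain stabilizes; pulling the stabilization back through the isomorphisms $\phi_m$ and the automorphism $\bar\alpha$ then forces the original chain in $A(R,\alpha)$ to stabilize as well, which is exactly \emph{ACCPL(R)} for $A(R,\alpha)$.

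\textbf{The main obstacle} is the bookkeeping of \emph{levels}: the generators $a_p$ need not all lie in a single $R_m$, so the exponents $u,v$ appearing in the translated relations vary with $p$, and a priori the contraction of a principal left ideal of $A(R,\alpha)$ to $R_0$ need only be a left ideal of $R$, not obviously a principal one. The heart of the argument is therefore to show that strict ascent of the chain in $A(R,\alpha)$ forces infinitely many strict ascents of an associated chain of \emph{principal} left ideals of $R$; this is where injectivity of $\alpha$ and the compatibility $\phi_m=\phi_{m+1}\circ\alpha$ must be used to realign the varying $\alpha$-powers, and it is the step that genuinely needs the hypothesis \emph{ACCPL(R)} on $R$ rather than a weaker chain condition.
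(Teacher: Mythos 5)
Your plan follows the same route as the paper's own proof: write the generators as $a_p=x^{-m_p}r_px^{m_p}$, convert each containment $Aa_p\subseteq Aa_{p+1}$ into a coefficient relation in $R$, assemble those into an ascending chain of principal left ideals of $R$, stabilize it using \emph{ACCPL(R)} for $R$, and transport the stabilization back to $A(R,\alpha)$. But the step you explicitly defer --- ``normalizing the exponents'' so that the coefficients really give a chain $R\rho_1\subseteq R\rho_2\subseteq\cdots$ --- is not bookkeeping: it is exactly where the argument collapses, and it cannot be repaired. Bringing $a_p=c_pa_{p+1}$ (with $c_p=x^{-j_p}s_px^{j_p}$) to a common level $L$ gives $\alpha^{L-m_p}(r_p)=\alpha^{L-j_p}(s_p)\,\alpha^{L-m_{p+1}}(r_{p+1})$, and applying $\alpha$ shifts both exponents by the same amount; hence every divisibility supplied by the containments has the form $\alpha^{u}(r_p)\in R\,\alpha^{v}(r_{p+1})$ with $u+m_p=v+m_{p+1}$. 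Consequently any chain $R\alpha^{d_1}(r_1)\subseteq R\alpha^{d_2}(r_2)\subseteq\cdots$ built from these relations forces $d_p+m_p$ to be constant in $p$ with all $d_p\ge 0$, which is impossible once the levels $m_p$ are unbounded; passing to a cofinal subsequence changes nothing, since $Aa_p\subseteq Aa_q$ yields the same constraint $u+m_p=v+m_q$. Injectivity of $\alpha$ is of no use here; what one needs is surjectivity, i.e.\ the ability to apply negative powers of $\alpha$ --- and that is precisely what the paper's own proof assumes tacitly when it writes the chain $\langle r_1\rangle\subseteq\langle\alpha^{i_1-i_2}(r_2)\rangle\subseteq\langle\alpha^{i_1-i_3}(r_3)\rangle\subseteq\cdots$, where the exponents $i_1-i_j$ may be negative and $\alpha^{i_1-i_j}(r_j)$ is then undefined for a mere monomorphism. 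So the gap you flagged as ``the main obstacle'' is a genuine hole, and it is present (hidden in notation) in the paper's proof as well.

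Moreover the gap is unfixable, because Proposition \ref{pro 3} is false as stated. Let $R=k[y]$, a commutative Noetherian domain (so it satisfies both \emph{ACCPL} and \emph{ACCPR}), and let $\alpha$ be the monomorphism $\alpha(f(y))=f(y^2)$. Put $u_n=x^{-n}yx^n\in A(R,\alpha)$; the identity $x^{-n}rx^n=x^{-(n+1)}\alpha(r)x^{n+1}$ gives $u_n=u_{n+1}^2$, and in fact $A(R,\alpha)=\bigcup_{n\ge0}x^{-n}k[y]x^n$ is isomorphic to $k[y,y^{1/2},y^{1/4},\dots]$, the semigroup algebra of $(\mathbb{Z}[\tfrac12]\cap[0,\infty),+)$, with $u_n$ corresponding to $y^{1/2^n}$. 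In this domain every nonzero element has a least exponent $\ge0$, and least exponents add under multiplication; so $u_n=u_{n+1}\cdot u_{n+1}$ gives $Au_0\subseteq Au_1\subseteq Au_2\subseteq\cdots$, whereas $u_{n+1}=fu_n$ would force the least exponent of $f$ to be $1/2^{n+1}-1/2^n<0$. Hence this chain of principal (left $=$ right, as $A(R,\alpha)$ is commutative here) ideals is strictly ascending, and $A(R,\alpha)$ satisfies neither \emph{ACCPL} nor \emph{ACCPR} although $R$ satisfies both; note the levels are $m_n=n$, unbounded, exactly the situation your normalization cannot reach. Any correct version of the proposition needs an extra hypothesis on $\alpha$ (surjectivity makes it trivial, since then $A(R,\alpha)=R$); as it stands, the same example also undercuts Theorems \ref{ACCPL-Domain} and \ref{thm 6.7}, which invoke this proposition.
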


\begin{proof}
Assume that
\begin{align*}
\langle x^{-i_1}r_1x^{i_1} \rangle \subseteq \langle x^{-i_2}r_2x^{i_2} \rangle \subseteq \cdots
\end{align*}
is a nonstabilized chain in the ring $A(R,\alpha)$. Then, $x^{-i_l}r_lx^{i_l}=(x^{-i_{l+1}}r_{l+1}x^{i_{l+1}})(x^{-t}sx^t)$ for some
$x^{-t}sx^{t}\in A(R,\alpha)$.

Hence $\alpha^{i_{l+1}+t}(r_l)=\alpha^{i_l+t}(r_{l+1})\alpha^{i_l+i_{l+1}}(s)$. If $s'=\alpha^{i_l+i_{l+1}}(s)$, then  $r_l=\alpha^{i_l-i_{l+1}}(r_{l+1})s'$. So
\begin{align*}
\langle r_1 \rangle \subseteq \langle \alpha^{i_1-i_2}(r_2) \rangle \subseteq \langle \alpha^{i_1-i_3}(r_3) \rangle\subseteq \cdots.
\end{align*}
Since $R$ is \emph{ACCPL(R)}-ring, the above chain will stabilized. So $r_k=\alpha^{i_k-i_{k+1}}(r_{k+1})u$, where $u$ is a unit. Thus $\alpha^{i_{k+1}}(r_k)=\alpha^{i_k}(r_{k+1})\alpha^{i_{k+1}}(u)$. Consider $u':=\alpha^{i_{k+1}}(u)$. It is easy to see that $u'$ is a unit in $R$, so
\begin{align*}
x^{-i_{k+1}}(r_{k+1})x^{i_{k+1}}=\alpha^{i_{k+1}}(r_{k+1})=\alpha^{i_k}(r_k)u'=(x^{-i_k}(r_k)x^{i_k})(x^{-i_{k+1}}ux^{i_{k+1}}).
\end{align*}
So $\langle x^{-i_{k+1}}r_{k+1}x^{i_{k+1}} \rangle=\langle x^{-i_k}r_kx^{i_k} \rangle$, which implies that the chain $\langle x^{-i_1}r_1x^{i_1} \rangle \subseteq \langle x^{-i_2}r_2x^{i_2} \rangle\subseteq \cdots$ will stabilize.
\end{proof}

\begin{lemma}\label{pro 4}
Let $R$ be an $\alpha$-rigid ring. If $\Ann(L_1)\subseteq \Ann(L_2)\subseteq \cdots$, where $L_i\subseteq A(R;\alpha)$ and  $K_i=\{k| \exists t :x^{-t}kx^t \in L_i\}$, then
\begin{align*}
\Ann(K_1)\subseteq \Ann(K_2)\subseteq \cdots.
\end{align*}
\end{lemma}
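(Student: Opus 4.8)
The plan is to push the annihilator chain from $A(R,\alpha)$ down to $R$ through the natural embedding $R \hookrightarrow A(R,\alpha)$, $a \mapsto x^{0}ax^{0}$, and to show that under this embedding $a$ lies in $\Ann_R(K_i)$ exactly when $x^{0}ax^{0}$ lies in $\Ann(L_i)$. First I would record the relevant product: for $a\in R$ and a typical element $x^{-t}kx^{t}$ of $L_i$, the multiplication rule of $A(R,\alpha)$ gives $(x^{0}ax^{0})(x^{-t}kx^{t}) = x^{-t}\alpha^{t}(a)k\,x^{t}$, and since $x$ is a unit in $R[x,x^{-1};\alpha]$ this vanishes precisely when $\alpha^{t}(a)k = 0$ in $R$.

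The decisive ingredient is $\alpha$-rigidity. Exactly as in Lemma \ref{jav}, for an $\alpha$-rigid ring one has $\alpha^{t}(a)k = 0$ if and only if $ak = 0$, for every nonnegative integer $t$. Combining this with the product computation, I would establish for each $i$ the equivalence
\[
a \in \Ann_R(K_i) \quad\Longleftrightarrow\quad x^{0}ax^{0} \in \Ann(L_i).
\]
For the forward implication, if $a\in\Ann_R(K_i)$ then every $\ell = x^{-t}kx^{t}\in L_i$ has $k\in K_i$, so $ak=0$, hence $\alpha^{t}(a)k=0$ and $x^{0}ax^{0}$ kills $\ell$; for the converse, given $k\in K_i$ one picks $t$ with $x^{-t}kx^{t}\in L_i$, and $x^{0}ax^{0}\in\Ann(L_i)$ forces $\alpha^{t}(a)k=0$, whence $ak=0$.

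With this equivalence in hand the conclusion is immediate: taking $a\in\Ann(K_1)$ gives $x^{0}ax^{0}\in\Ann(L_1)\subseteq\Ann(L_2)$, and reading the equivalence backwards at level $2$ yields $a\in\Ann(K_2)$, so $\Ann(K_1)\subseteq\Ann(K_2)$, and the same step repeats at every stage. The point needing care is the ambiguity in writing elements of $A(R,\alpha)$, since $x^{-i}rx^{i}=x^{-(i+j)}\alpha^{j}(r)x^{i+j}$ allows one element of $L_i$ to contribute several coefficients $\alpha^{j}(r)$ to $K_i$; here $\alpha$-rigidity is exactly what guarantees $ar=0 \Leftrightarrow a\alpha^{j}(r)=0$, so the various representatives behave uniformly and the equivalence is well posed. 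Finally, because $R$ is $\alpha$-rigid and $A(R,\alpha)$ is $\alpha$-rigid by Proposition \ref{pro 2}, both rings are reduced and their left and right annihilators coincide, so the argument applies verbatim whether $\Ann$ denotes the left or the right annihilator.
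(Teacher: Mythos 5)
Your proposal is correct and follows essentially the same route as the paper: embed an element $a\in\Ann(K_l)$ into $A(R,\alpha)$ (the paper uses the conjugates $x^{-r}ax^{r}$, you use just $x^{0}ax^{0}$), use the $\alpha$-rigid transfer $ak=0\Leftrightarrow\alpha^{t}(a)k=0$ to see it annihilates $L_l$, pass through $\Ann(L_l)\subseteq\Ann(L_{l+1})$, and pull back to $\Ann(K_{l+1})$. Your packaging of this as the equivalence $a\in\Ann_R(K_i)\Leftrightarrow x^{0}ax^{0}\in\Ann(L_i)$, together with the explicit remark on the non-uniqueness of representatives $x^{-i}rx^{i}=x^{-(i+j)}\alpha^{j}(r)x^{i+j}$, is a slightly cleaner write-up of the same argument.
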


\begin{proof}
Let $s\in \Ann(K_l)$. So $sk_l=0$ for each $k_l\in K_l$, so there
exists $c> 0$, such that $x^{-c}k_lx^c\in L_l$. Since $sk_l=0$,
$s\alpha^r(k_l)=0$ for each $r\geq 0$, as $R$ is $\alpha$-rigid. So
$(x^{-r}sx^r)(x^{-c}k_lx^c)=0$. Since $k_l$ is arbitrary in $K_l$,
$x^{-r}sx^r\in \Ann(L_l)$. So $x^{-r}sx^r\in \Ann(L_{l+1})$, for
each $r$. Suppose that $a\in K_{l+1}$. So there is $b>0$ such that
$x^{-b}ax^b\in L_{l+1}$. Then $(x^{-r}sx^r)(x^{-b}ax^b)=0$ for each
$r> 0$. So $x^{-r-b}\alpha^b(s)\alpha^r(a)x^{b+r}=0$, which means
that $sa=0$. So $s\in \Ann(K_{l+1})$, and hence $\Ann(K_l)\subseteq
\Ann(K_{l+1})$.
\end{proof}

\begin{theorem}\label{ACCPL-Domain}
If $R$ is an ACCPR(L)-domain with a monomorphism $\alpha$, then $R[x,x^{-1};\alpha]$ is an ACCPR(L)-domain.
\end{theorem}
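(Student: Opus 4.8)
The plan is to use Jordan's ring $A(R,\alpha)$ to reduce to the situation in which the endomorphism is an \emph{automorphism}, and then to combine a degree count with the characterization of \emph{ACCPL}-domains from \cite[Lemma 3.1]{liu}. I treat the \emph{ACCPL} case; the \emph{ACCPR} case is entirely symmetric. First I would set $B:=A(R,\alpha)$. Since $R$ is a domain with a monomorphism $\alpha$, the recalled proposition gives that $B$ is a domain, while Proposition~\ref{pro 3} gives that $B$ satisfies \emph{ACCPL}; moreover $\alpha$ restricts to an automorphism of $B$. Thus $B$ is an \emph{ACCPL}-domain carrying an automorphism $\alpha$. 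Using the isomorphism $R[x,x^{-1};\alpha]\simeq B[x,x^{-1};\alpha]$ recorded above, it suffices to prove that $T:=B[x,x^{-1};\alpha]$ is an \emph{ACCPL}-domain. This step is the reason $A(R,\alpha)$ was introduced: it makes $\alpha$ invertible, which is exactly what the bookkeeping below requires.

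Next I would note that $T$ is a domain, since over a domain with injective $\alpha$ the extreme terms of a product of nonzero Laurent polynomials do not cancel; hence \cite[Lemma 3.1]{liu} applies to $T$, and to prove \emph{ACCPL} for $T$ it is enough to verify its condition~(3): for sequences $(p_n),(q_n)$ in $T$ with $p_n=q_np_{n+1}$, I must find $m$ with $q_m\in U(T)$. We may assume every $p_n\neq0$, so every $q_n\neq0$. For nonzero $f=\sum_i f_ix^i$ write $\deg^+(f)$ and $\deg^-(f)$ for its top and bottom exponents and set $\beta(f)=\deg^+(f)-\deg^-(f)\geq0$. Because $T$ is a domain these are additive over products, so $\beta(p_n)=\beta(q_n)+\beta(p_{n+1})$; the nonnegative integers $\beta(p_n)$ therefore decrease and eventually stabilize, which forces $\beta(q_n)=0$, i.e. $q_n=c_nx^{k_n}$ with $0\neq c_n\in B$, for all $n\geq N$.

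For $n\geq N$ I would compare top coefficients in $p_n=c_nx^{k_n}p_{n+1}$. With $e_n:=\deg^+(p_n)$ and $\lambda_n$ the top coefficient of $p_n$, additivity yields $e_n=k_n+e_{n+1}$ and $\lambda_n=c_n\alpha^{k_n}(\lambda_{n+1})$. Untwisting by the automorphism, I set $\nu_n:=\alpha^{-e_n}(\lambda_n)$ and $d_n:=\alpha^{-e_n}(c_n)$; since $k_n=e_n-e_{n+1}$ and $\alpha$ is bijective, the relation collapses to $\nu_n=d_n\nu_{n+1}$ in $B$, with each $d_n\neq0$. As $B$ is an \emph{ACCPL}-domain, condition~(3) of \cite[Lemma 3.1]{liu} applied to $(\nu_n),(d_n)$ produces some $m\geq N$ with $d_m\in U(B)$; because $\alpha$ is an automorphism, $c_m=\alpha^{e_m}(d_m)\in U(B)$, whence $q_m=c_mx^{k_m}\in U(T)$. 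This verifies condition~(3) for $T$, so $T$ is an \emph{ACCPL}-domain, and the \emph{ACCPR} statement follows by the mirror argument (right ideals, bottom coefficients, and the right-hand form of \cite[Lemma 3.1]{liu}). The main obstacle is exactly this last untwisting: it is valid only because the passage to $A(R,\alpha)$ has turned $\alpha$ into an automorphism, so that negative powers of $\alpha$ make sense and units are sent to units.
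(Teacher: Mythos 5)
Your proof is correct, and it follows the paper's broad strategy---pass to Jordan's ring $A(R,\alpha)$ so that the endomorphism becomes an automorphism, invoke $R[x,x^{-1};\alpha]\simeq A(R,\alpha)[x,x^{-1};\alpha]$, and finish with a degree argument that feeds into the \emph{ACCPL} property of $A(R,\alpha)$---but your execution of the degree argument is genuinely different and noticeably more robust. The paper tracks the top and bottom degrees $\delta_i=d_+(f_i)$ and $\theta_i=d_-(f_i)$ separately and runs a three-case analysis, dismissing as impossible the cases of a subsequence with strictly increasing $\delta$'s or strictly decreasing $\theta$'s (``unless $\delta_1=+\infty$''); that dismissal is under-justified, because multipliers of the form $cx^{k}$ (for instance the unit $x$ itself) shift both degrees simultaneously, so such subsequences can occur, and the paper's surviving case (ii) only covers multipliers that are honest constants in $A(R,\alpha)$. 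Your width function $\beta(f)=\deg^+(f)-\deg^-(f)$ is immune to exactly this defect: it is additive over products because the Laurent ring is a domain, it is a non-negative integer, hence it is eventually constant along the chain, and this shows the multipliers are eventually monomials $c_nx^{k_n}$---precisely the case the paper's trichotomy never cleanly reaches. Your untwisting $\nu_n=\alpha^{-e_n}(\lambda_n)$, $d_n=\alpha^{-e_n}(c_n)$, legitimate precisely because $\alpha$ is an automorphism of $A(R,\alpha)$, then converts the top-coefficient relations into a genuine left-divisibility chain in $A(R,\alpha)$, to which condition (3) of the characterization quoted from \cite[Lemma 3.1]{liu} applies; the paper performs the analogous step only implicitly in its case (ii). Two minor points you should state explicitly: first, Liu's criterion must be read for sequences of \emph{nonzero} elements (as literally transcribed it fails for the all-zero sequence in any ring), and your reduction ``we may assume every $p_n\neq 0$'' is justified by noting that zeros propagate backwards along $p_n=q_np_{n+1}$, so either every $p_n$ vanishes and the chain is constant, or the $p_n$ are eventually nonzero; second, your citations are the right ones---$A(R,\alpha)$ is a domain by the first recalled proposition and satisfies \emph{ACCPL} by Proposition \ref{pro 3}, whereas the paper's own proof cites Proposition \ref{pro 2} (the rigidity statement) for this, which appears to be a slip.
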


\begin{proof}
Let $R$ be an \emph{ACCPR(L)}-domain. Then $A:=A(R,\alpha)$ is an \emph{ACCPR(L)}-domain by Proposition \ref{pro 2}.
By \cite[Section 2]{Jordan},  $R[x,x^{-1};\alpha]\simeq A[x,x^{-1};\alpha]$,
so it is enough to show that $A[x,x^{-1};\alpha]$ is an \emph{ACCPR(L)}-domain.
Let $f\in A[x,x^{-1};\alpha]$. Suppose that $d_+(f)$ and $d_-(f)$ denote the degree of $f$ in positive and negetive coefficients, respectively. Suppose that $\langle f_1 \rangle\subseteq \langle  f_2 \rangle\subseteq \cdots$ is a non-stabilized chain. So $f_i=f_{i+1}k$. Let $\theta_i=d_-(f_i)$ and $\delta_i=d_+(f_i)$. We claim that there are two cases:

$(i)$: $\theta_i\le \theta_{i+1}$.

$(ii)$: $\delta_i\ge \delta_{i+1}$.

To prove it, suppose that $\delta_i<\delta_{i+1}$ and $\theta_i>\theta_{i+1}$. Let $f_i(x)=\sum_{l=\theta_i}^{\delta_i}(x^{-t_l}f_l^ix^{t_l})x^l$ and $k(x)=\sum_{j=d_-(k)}^{d_+(k)}(x^{-t_j}k_jx^{t_j})x^j$ and $f_{i+1}(x)=\sum_{h=\theta_{i+1}}^{\delta_{i+1}}(x^{-t_h}f_h^{i+1}x^{t_h})x^h$. Hence

\begin{align*}
x^{-t_{\delta_i}}f_{\delta_i}^ix^{t_{\delta_i}}=(x^{-t_{\delta_{i+1}}}f_{\delta_{i+1}}^{i+1}x^{t_{\delta_{i+1}}})(x^{-t_{d_+(k)}}k_{d_+(k)}x^{t_{d_+(k)}}).
\end{align*}
Or we have,
\begin{align*}
x^{-t_{\theta_i}}f_{\theta_i}^ix^{t_{\theta_i}}=(x^{-t_{\theta_{i+1}}}f_{\theta_{i+1}}^{i+1}x^{t_{\theta_{i+1}}})(x^{-t_{d_-(k)}}k_{d_-(k)}x^{t_{d_-(k)}}).
\end{align*}
So if $d_+(k)\ge 0$, then $\delta_i\ge \delta_{i+1}$, since $R$ is domain and $f_{\delta_{i+1}}^{i+1}$, $k_{d_+(k)}\neq 0$. Similarly, if $d_-(k)<0$, then $\theta_i\le \theta_{i+1}$. So it contradicts to our assumption, hence one of mentioned cases occurs. Now there are three cases for the sequence $\langle f_1 \rangle\subseteq \langle f_2 \rangle \subseteq \cdots$.

$(i)$: There exists a subsequence $f_{i_j}$ such that $\langle f_{i_1} \rangle \subseteq \langle f_{i_2} \rangle \subseteq \cdots$ and $\delta_{i_j}>\delta_{i_{j-1}}$ for all $j$. But it is impossible unless $\delta_1=+\infty$ which is a contradiction.

$(ii)$: For each $i$ we have $\delta_i=\delta_{i+1}$ and $\theta_i=\theta_{i+1}$. So if $f_{i+1}k_i=f_i$, then $k_i$ is constant. But $A$ is an \emph{ACCPL(R)}-ring, so there exists $N$ such that $k_N$ is unit. Thus $\langle f_{N+1}\rangle=\langle f_N \rangle$.

$(iii)$: There does not exist any subsequence such that $\delta_{i_j}>\delta_{i_{j+1}}$ for all $j$. So it should exist one subsequence $\{f_{i_j}\}$ such that $\theta_{i_j}<\theta{i_{j-1}}$, otherwise $\delta_i=\delta_{i+1}$ and $\theta_i=\theta_{i+1}$ for $i>N$ which is case $(ii)$. This means that $\theta_1=-\infty$ which is impossible. So the result follows
\end{proof}
In the following result we consider the case that $R$ is not assumed to be a domain. Notice that rings with  rigid endomorphisms are reduced.
\begin{theorem}\label{thm 6.7}
Let $R$ be an $\alpha$-rigid ring and $\alpha$ preserves nonunits. If $R$ is an ACCPL(R)-ring satisfying $ACC$ on left(or right) annihilators,  then  $R[x,x^{-1};\alpha]$ is an ACCPL(R)-ring.
\end{theorem}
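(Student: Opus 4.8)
The plan is to imitate the Frohn-type argument of Theorem \ref{reza}, reducing the claim to the domain case already treated in Theorem \ref{ACCPL-Domain}. Since $R$ is $\alpha$-rigid, $\alpha$ is a monomorphism and $R$ is reduced, so by \cite[Section 2]{Jordan} we have $R[x,x^{-1};\alpha]\simeq A[x,x^{-1};\alpha]$ with $A:=A(R,\alpha)$, and over $A$ the map $\alpha$ is an \emph{automorphism}. By Proposition \ref{pro 2} the ring $A$ is $\alpha$-rigid (hence reduced), and by Proposition \ref{pro 3} it is an \emph{ACCPL(R)}-ring; moreover an automorphism maps units bijectively onto units, so $\alpha$ automatically preserves nonunits of $A$. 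Finally, Lemma \ref{pro 4} lets me transfer the \emph{ACC} on annihilators from $R$ to $A$: an ascending chain of annihilators in $A$ induces, through the coefficient sets $K_i$, an ascending chain of annihilators in $R$, which stabilizes by hypothesis, and since each annihilator in $A$ is recovered from the corresponding one in $R$ via $\alpha$-rigidity, the chain in $A$ stabilizes as well. Thus it suffices to prove the theorem for $B:=A[x,x^{-1};\alpha]$ with $\alpha$ an automorphism of the reduced ring $A$.

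Assume, towards a contradiction, that $B$ carries a non-stabilizing ascending chain of principal left ideals. For $f\in B$ I would let $\pi(f)$ denote the highest exponent of $x$ occurring in $f$ (the datum $d_+$ of Theorem \ref{ACCPL-Domain}) and set $I_f=\{g(\pi(g))\mid g\in BfB\}\cup\{0\}$, the set of leading coefficients of the two-sided ideal generated by $f$; one checks routinely that $I_f$ is an ideal of $A$. Then form
\begin{align*}
M=\left\{\Ann_A\Big(\bigcup_{i\ge 1}I_{g_i}\Big)\ \Big|\ Bg_1\subseteq Bg_2\subseteq\cdots\ \text{is a non-stabilizing chain of principal left ideals of } B\right\},
\end{align*}
which is nonempty by assumption. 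Since $A$ satisfies the \emph{ACC} on annihilators, $M$ has a maximal element $P=\Ann_A\big(\bigcup_{i\ge1}I_{f_i}\big)$, attached to a non-stabilizing chain $Bf_1\subseteq Bf_2\subseteq\cdots$.

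Next I would show, exactly as in Theorem \ref{reza}, that $P$ is a completely prime, $\alpha$-invariant ideal of $A$. If $a,b\notin P$ but $ab\in P$, then $\alpha$-rigidity and Lemma \ref{kav} place $a$ in $\Ann_A(\bigcup_{i\ge1}I_{bf_i})$ while $P\subseteq\Ann_A(\bigcup_{i\ge1}I_{bf_i})$; the maximality of $P$ forces the chain built from $\{bf_i\}$ to stabilize, and unwinding the resulting factorizations together with the reducedness of $A$ returns the relation $f_{m+1}\in Bf_m$, contradicting non-stabilization. Hence no such $a,b$ exist and $P$ is completely prime. The identity $\alpha^{-1}(P)=P$ follows from the injectivity of $\alpha$ and $\alpha$-rigidity precisely as in the invariance step of Theorem \ref{reza}, so $P$ is $\alpha$-invariant and $\bar\alpha$ is a well-defined automorphism of $A/P$.

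Finally, $A/P$ is a domain which inherits \emph{ACCPL(R)}, and $\bar\alpha$ is a monomorphism of it, so Theorem \ref{ACCPL-Domain} shows that $(A/P)[x,x^{-1};\bar\alpha]$ is an \emph{ACCPL(R)}-domain. Consequently the image of the chain $\{f_i\}$ stabilizes downstairs. The point of the argument is that reducedness prevents the leading coefficient $f_i(\pi(f_i))$ from lying in $P$, for otherwise $f_i(\pi(f_i))^2=0$ would force $f_i(\pi(f_i))=0$, which is impossible; hence the images $\bar{f_i}$ are nonzero, and stabilization in $(A/P)[x,x^{-1};\bar\alpha]$ lifts to stabilization of $Bf_1\subseteq Bf_2\subseteq\cdots$, the desired contradiction. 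The main obstacle I anticipate is this lifting step: transporting a factorization from $A/P$ back to $B$ requires controlling, via Lemmas \ref{jav} and \ref{kav}, the non-leading coefficients of the difference between $f_{m+1}$ and its candidate multiple, which is exactly where the parallel proof of Theorem \ref{reza} does its most delicate bookkeeping.
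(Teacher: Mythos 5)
Your scaffolding reproduces the paper's proof almost step for step: the reduction through Jordan's construction $A=A(R,\alpha)$ using Propositions \ref{pro 2}, \ref{pro 3} and Lemma \ref{pro 4}, the Frohn-style family $M$ of annihilators $\Ann_A\big(\bigcup_i I_{g_i}\big)$ attached to non-stabilizing chains, a maximal element $P$ shown to be completely prime and $\alpha$-invariant, and passage to the domain $(A/P)[x,x^{-1};\bar\alpha]$, which is an \emph{ACCPL(R)}-domain by Theorem \ref{ACCPL-Domain}. But the proposal stops exactly where the proof has to be won. Declaring that ``stabilization in $(A/P)[x,x^{-1};\bar\alpha]$ lifts to stabilization of $Bf_1\subseteq Bf_2\subseteq\cdots$'' and then flagging this lifting as ``the main obstacle I anticipate'' is not a proof of the lifting; it \emph{is} the gap. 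What must actually be shown is: once the domain case makes $\bar g_m$ invertible, say $\bar g_m\bar h=\bar h\bar g_m=\bar 1$, every coefficient $b$ of $g_mh-1$ lies in $P=\Ann_A\big(\bigcup_i I_{f_i}\big)$, and from this one must deduce the exact identity $f_{m+1}(g_mh-1)=0$ in $B$ itself --- not merely modulo $P$ --- so that $f_{m+1}=f_{m+1}g_mh=f_mh$ and the chain stabilizes. The paper closes this by an induction on the finitely many coefficients of $f_{m+1}$: the extreme coefficients $a_{\delta_{m+1}}, a_{\theta_{m+1}}$ lie in $I_{f_{m+1}}$, hence are killed by $b$, so after the rigidity manipulations the product $f_{m+1}b$ has strictly smaller support; since $f_{m+1}b$ again lies in the two-sided ideal generated by $f_{m+1}$, its new extreme coefficients again lie in $I_{f_{m+1}}$, and iterating yields $f_{m+1}b=0$. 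None of this bookkeeping appears in your proposal, and without it there is no contradiction.

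A secondary issue feeds into this. Your $I_f$ records only leading coefficients (via the top exponent $\pi$), whereas the paper's $I_f$ in the Laurent setting is built from \emph{both} ends: elements $a_{d_+(g)}+b_{d_-(h)}$ for $g,h$ in the two-sided ideal of $f$. In a Laurent polynomial ring there is no well-ordering of supports to exploit, so the closing induction must run on finite supports, and the paper's two-ended $I_f$ is tailored to peeling coefficients from both extremes simultaneously. A top-only $I_f$ can in principle be made to work (supports are finite, so a top-down peeling also terminates), but then you owe two verifications you never supply: (i) that your $I_f$ is closed under the ring operations of $A$ --- sums of leading coefficients of elements of different top degrees, and right multiplication by elements of $A$, introduce $\alpha$-twists, so the set as literally defined need not be an ideal without invoking rigidity; and (ii) the coefficient-by-coefficient rigidity computations (Lemmas \ref{jav} and \ref{kav}) showing that $b$ annihilating each successive leading coefficient forces that term of $f_{m+1}b$ to vanish. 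As it stands, the proposal is a correct plan that matches the paper's strategy, but it is not yet a proof of Theorem \ref{thm 6.7}.
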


\begin{proof} Let $R$ be an \emph{ACCPL(R)}-ring satisfying  \emph{ACC} on left(or right) annihilators. So $A:=A(R,\alpha)$ is an \emph{ACCPL(R)}-ring and satisfies the \emph{ACC} for left(or right) annihilators by Propositions \ref{pro 3} and \ref{pro 4}. Because $R[x,x^{-1};\alpha] \simeq A[x,x^{-1};\alpha]$, so  it is enough to show that  $A[x,x^{-1};\alpha]$ is an \emph{ACCPL(R)}-ring.
We prove this  for right case, the left case is similar. Let $I_f:=\{a_{d_+(g)}+b_{d_-(h)}| g,h\in SfS, g=\sum a_ix^i, h=\sum b_ix^i \}$ for each $f\in A[x,x^{-1};\alpha]$. One can show that $I_f$ is an ideal.

Now assume that there exists a nonstabilizing chain of principal  right ideals of $S$. So the set
\begin{align*}
M=\left\{\Ann(\bigcup I_{g_i}) | g_1S\subseteq g_2S\subseteq \cdots  \textit{a nonstabilizing chain} \right\}
\end{align*}
is nonempty. Since $R$ is $\alpha$-rigid, $R$ is reduced and so it is easy to see that since $A$ satisfies the \emph{ACC} on right annihilators, $A$ satisfies the \emph{ACC} on left annihilators. Thus $M$ has a maximal element. Let $P=\Ann(\bigcup I_{f_i})$ be a maximal element of $M$, where $f_1S\subseteq f_2S\subseteq \cdots$ is a nonstabilizing chain in $S$. We show that $P$ is a completely prime ideal in $A$. Assume $a,b\in R\setminus P$ and $ab\in P$. So $ab\in \Ann(\bigcup I_{f_i})$ which  means that $abI_{f_i}=0$ for each $i$. So for each $x^{-i}(a_{d_+(g)}+b_{d_-(h)})x^i$, $abx^{-i}(a_{d_+(g)}+b_{d_-(h)})x^i=0$. If $a=x^{-\gamma}a' x^{\gamma}$, $b=x^{-\beta}b' x^{\beta}$, then
\begin{align*}
x^{-\gamma}a' x^{\gamma}(x^{-\beta}b' x^{\beta}.x^{-i}(a_{d_+(g)}+b_{d_-(h)})x^i)=0
\end{align*}
with $b'g, b'h\in SfS$. Thus,
\begin{align*}
&x^{-\gamma}a' x^{\gamma}(x^{-\beta}b' x^{\beta}.x^{-i}(a_{d_+(g)}+b_{d_-(h)})x^i)=\\
&x^{-\gamma}a' x^{\gamma}(x^{-\beta -i}\alpha^i(b')\alpha^{\beta}(a_{d_+(g)}+b_{d_-(h)})x^{\beta +i})=0.
\end{align*}
Since $A$ is $\alpha$-rigid,  $a\in \Ann(I_{bf_i})$. Also the chain $bf_1S \subseteq bf_2S\subseteq \cdots$ stabilizes, because $P\subseteq \Ann(I_{bf_i})$. Hence there exists a positive integer $t$ such that for each $n\ge t$, $bf_{n+1}=bf_nh_n$. Also for each $n$ one can see that $f_n=f_{n+1}g_n$. So $bf_{n+1}=bf_{n+1}g_nh_n$. So $bf_{n+1}(1-g_nh_n)=0$. Let $q_i=f_i(1-g_{i-1}h_{i-1})$, for each $i>t$. So $b\in \Ann(\bigcup I_{q_i})$ and also $P\subseteq \Ann(I_{q_i})$. So $q_1S\subseteq q_2S\subseteq \cdots$ stabilizes. Thus there exists a positive integer $t'$ such that for each $m \ge t'$, $q_{m+1}=q_ml_m$ for some $l_m\in S$. Then $f_{m+1}(1-g_mh_m)=f_m(1-g_{m-1}h_{m-1})l_m$ and so $f_{m+1}\in f_mS$, which is a contradiction.  So $P$ is completely prime.

We show that $P$ is $\alpha$-invariant. Let $a\in \alpha^{-1}(P)$. So $\alpha(a)\in P$. Hence $\alpha(a)I_{f_i}=0$ for each $i$. But, since $A$ is $\alpha$-rigid,  $aI_{f_i}=0$ for each $i$. So $a\in P$ and that $\alpha^{-1}(P)\subseteq P$.

Let $a\in P$. So $aI_{f_i}=0$ for each $i$. Hence $\alpha^{-1}(a)I_{f_i}=0$, as $R$ is $\alpha$-rigid and $\alpha$ is an automorphism of $A$. So $\alpha^{-1}(P)=P$ and $P$ is $\alpha$-invariant. Thus $T:=R/P[x,x^{-1};\overline{\alpha}]$ is an \emph{ACCPL(R)}-domain. We know that for each $i$, $\bar{f_i}=\overline{f_{i+1}}\bar{g_i}$ with
\begin{align*}
\bar{f_i}=\sum_{t=\theta_i}^{\delta_i}(a_t+P)x^{t}.
\end{align*}
If $\bar{f_i}=\bar{0}$ for some $i$, then $a_{\delta_i}\in P$. So $a_{\delta_i}I_{f_i}=0$. Thus $a_{\delta_i}\alpha^{\delta_i}(a_{\delta_i})=0$ which means that $a_{\delta_i}=0$, since $A$ is $\alpha$-rigid. So $\bar{f_i}\neq \bar{0}$ and so $\bar{g_i}\neq \bar{0}$. Since $T$ is an \emph{ACCPR(L)}-ring, $\bar{g_i}$ must be a unit where $i>m$ for some $m>0$. So $\bar{g_i}\bar{h}=\bar{h}\bar{g_i}=\bar{1}$. So for each coefficient $b$ of the polynomial $g_mh-1$, we claim that $b\in P$. We claim that $f_{m+1}(g_mh-1)=0$. Assume that
\begin{align*}
f_{m+1}=\sum_{t=\theta_{m+1}}^{\delta_{m+1}}(a_t)x^{t}.
\end{align*}
So $a_{\delta_{m+1}},a_{\theta_{m+1}}\in I_{f_{m+1}}$. Since $b\in P$,  $ba_{\delta_{m+1}}=0,ba_{\theta_{m+1}}=0$. Hence
\begin{align}
f_{m+1}b&=\sum_{t=\theta_{m+1}}^{\delta_{m+1}}(a_t)x^{t}b\nonumber\\
&=a_{\theta_{m+1}}\alpha^{\theta_{m+1}}(b)x^{\theta_{m+1}}+\cdots +a_{\delta_{m+1}}\alpha^{\delta_{m+1}}(b)x^{\delta_{m+1}}\nonumber\\
&=\sum_{t=\theta_{m+1}+1}^{\delta_{m+1}-1}(a_t)x^{t}b.
\end{align}
But $f_{m+1}b\in SfS$, so $a_{\theta_{m+1}+1}, a_{\delta_{m+1}-1}\in I_{f_{m+1}}$. So $ba_{\theta_{m+1}+1}=ba_{\delta_{m+1}-1}=0$. Hence inductively we have $f_{m+1}b=0$. So $f_{m+1}(g_mh_m-1)=0$, and $f_{m+1}=f_{m+1}g_mh=f_mh$. So $\langle f_1 \rangle \subseteq \langle f_2 \rangle \subseteq \cdots$ stabilizes, which  contradicts our assumption and the result follows.
\end{proof}

\begin{example}
Let $k$ be a field and $R=\frac{k\langle x_1,x_2,\cdots \rangle}{\langle x_1x_2,x_1x_3, \cdots \rangle}$. The ring $R$ does not satisfy the \emph{ACC} on annihilators, as $\Ann(x_1,x_2,\cdots)\subseteq \Ann(x_2,x_3,\cdots)\subseteq \Ann(x_3,x_4,\cdots)\subseteq \cdots$. Also we claim that $R$ is an \emph{ACCP}-ring. To do this, first, notice that if $f,g\in R$ and $f_0,g_0\neq 0$, then
$\textit{deg}_{x_i}(fg)\geq \textit{max}(\textit{deg}_{x_i}(f),\textit{deg}_{x_i}(g)).$
Moreover, if $f_0=0$ and $\textit{deg}_{x_i}(g)>0$, then $\textit{deg}_{x_i}(fg)=0$ or $\textit{deg}_{x_i}(fg)>\textit{deg}_{x_i}(g)$ and
$(fg)_0=0$. The  case  $\textit{deg}_{x_i}(fg)>\textit{deg}_{x_i}(g)$ occurs if and only if $\textit{deg}_{x_i}(f), \textit{deg}_{x_i}(g)>0$.

Now let $\langle f_1 \rangle \subseteq \langle f_2 \rangle \subseteq \cdots$. So $f_n=g_nf_{n+1}$. If $g_{n,0}\neq 0$, $f_{n+1,0}\neq 0$ then $\textit{deg}_{x_i}(g)\geq \textit{deg}_{x_i}(f_{n+1})$ for each $i$. So $g_n$ must be a unit for each $n$ and since $k$ is a field, so the mentioned chain will stabilize.

Let $f_{1,0}=0$. Then $\textit{deg}_{x_i}(f_1g)\geq
\textit{deg}_{x_i}(f_1)$, when $\textit{deg}_{x_i}(f_1)>0$; and
$\textit{deg}_{x_i}(f_1g)=0$, otherwise. So one can see inductively
that $\textit{deg}_{x_i}(f_n)\leq \textit{deg}_{x_i}(f_1)$ if
$\textit{deg}_{x_i}(f_1)>0$ and $\textit{deg}_{x_i}(f_n)=0$,
otherwise. Hence
$\textit{deg}_{x_{i}}(f_n)=\textit{deg}_{x_{i}}(f_m)$ for each $x_i$
if $\textit{deg}_{x_i}(f_1)>0$ for infinitely many $m,n$ (otherwise
$\textit{deg}_{x_i}(f_1)=\infty$ which is impossible). Let $\langle
f_{n_1} \rangle \subseteq \langle f_{n_2} \rangle \subseteq \cdots$
be the mentioned chain such that
$\textit{deg}_{x_{i}}(f_{n_i})=\textit{deg}_{x_{i}}(f_{n_{i+1}})$.
So the chain stabilized by the fact that $k$ is a field. So the
chain $\langle f_1 \rangle \subseteq \langle f_2 \rangle \subseteq
\cdots$ will stabilize and $R$ is an \emph{ACCP}-ring.

Also, we claim that $R$ is reduced. Let $f\in R$ such that $f^2=0$. Let $\textit{deg}_{x_i}(f)>0$. It is easy to see that $\textit{deg}_{x_i}(f^2)=2\textit{deg}_{x_i}(f)>0$ which is impossible. So $R$ is reduced.

Now, we claim that $R[t]$ is not an \emph{ACCP}-ring. Let $f_n=(\sum_{i=n}^{\infty}x_i)t+(1+\sum_{i=n}^{\infty}x_i)$ and
$g_n(t)=x_{n-1}t+(1+x_{n-1})$. It is obvious that $f_ng_n=f_{n-1}$. So
$\langle f_1 \rangle \subseteq \langle f_2 \rangle \subseteq \cdots$.
Also it is easy to see that for each $n \geq 1$, $g_n \notin U(R)$.
Otherwise, let $g_n\in U(R)$ for some $n\ge 1$. So $g_nh=1$ for some $h \in R$ . So $(x_{n-1}t+(1+x_{n-1}))h=1$, and
$h_0=1+\sum_{j\neq n-1}a_jx_j$. So $h_0x_{n-1}+(1+x_{n-1})h_1=0$, and
\begin{align}\label{*}
x_{n-1}+h_1+x_{n-1}h_1=0.
\end{align}
Hence $h_{1,0}=0$. If $\textit{deg}_{x_{n-1}}(h_1)=0$, then left side of \ref{*} has a nonzero coefficient of $x_{n-1}$, but the right side does not. Else if, $\textit{deg}_{x_{n-1}}(h_1)>0$, then $\textit{deg}_{x_{n-1}}(x_{n-1}h_1)>\textit{max}(\textit{deg}_{x_{n-1}}(h_1),1)$. But $x_{n-1}h_1=-h_1-x_{n-1}$, which means that $\textit{deg}_{x_{n-1}}(-h_1-x_{n-1})=\textit{deg}_{x_{n-1}}(x_{n-1}h_1)$, which contradicts $\textit{deg}_{x_{n-1}}(x_{n-1}h_1)>\textit{max}(\textit{deg}_{x_{n-1}}(h_1),1)$. So $R[t]$ is not an \emph{ACCP}-ring. So the condition \emph{ACC} on annihilators can not be omitted.
\end{example}

\section{Archimedean Skew Generalized Power Series Rings}

A domain $R$ is said to be archimedean if $\cap_{n\geq 1}a^nR=0$
 for each nonunit $a$ of $R$. It is well-known that any domain satisfying \emph{ACCP} is archimedean, but the converse is not true (see, for example, Dumitrescu et al., \cite{Dumitrescu}, p. 1127).  We consider this property for skew generalized power series ring. First, recall that if $(S,.,\leq)$ is a strictly totally ordered monoid and $0\neq f \in R[[S,\omega]]$, then $\supp( f )$ is a nonempty well-ordered subset of $S$. The smallest element of $\supp( f )$ is denoted by $\pi( f )$.
R. Mazurek and M. Ziembowski in \cite[Proposition 3.2]{von} proved that if $s\in U(S)$ and $f (s)\in U(R)$, then $f \in U(R[[S,\omega]])$.

\begin{theorem}\label{domar}
Let $R$ be a right archimedean domain and $S$  an Artinian strictly totally ordered monoid. If $ \omega_s$ is injective for each $ s \in S$ and it preserves nonunit elements of $R$, then $ R[[S,\omega]] $ is a right archimedean domain.
\end{theorem}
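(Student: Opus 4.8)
The plan is to establish that $A := R[[S,\omega]]$ is a domain and then verify the right archimedean condition $\bigcap_{n\geq 1} f^n A = \{0\}$ for every nonunit $f\in A$, splitting the argument according to the value of $\pi(f)$.

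First I would record three structural facts. Since $(S,\leq)$ is strictly ordered it is cancellative, and for $0\neq f,g\in A$ the only pair contributing to the coefficient of $fg$ at $\pi(f)\pi(g)$ is $(\pi(f),\pi(g))$; as $R$ is a domain and $\omega_{\pi(f)}$ is injective, this coefficient $f(\pi(f))\,\omega_{\pi(f)}\big(g(\pi(g))\big)$ is nonzero, so $A$ is a domain and $\pi(fg)=\pi(f)\pi(g)$. Second, because $S$ is Artinian and strictly ordered, no element can be $<1$ (an element $s<1$ would force the infinite strictly descending chain $1>s>s^2>\cdots$), so $\pi(f)\geq 1$ for every nonzero $f$. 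Third, $\omega$ being a monoid homomorphism gives $\omega_1=\mathrm{id}_R$.

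Now fix a nonunit $f$ and a putative nonzero $g\in\bigcap_{n} f^n A$. Using that $A$ is a domain I can cancel powers of $f$ (from $g=f^n h_n=f^{n+1}h_{n+1}$) to produce $h_n$ with $g=f^n h_n$, $h_n=f h_{n+1}$ and $h_n\neq 0$; writing $s_0=\pi(f)$ and $a=f(s_0)$, multiplicativity of $\pi$ and the leading-coefficient computation give $\pi(h_n)=s_0\,\pi(h_{n+1})$ and $h_n(\pi(h_n))=a\,\omega_{s_0}\big(h_{n+1}(\pi(h_{n+1}))\big)$. If $s_0>1$, then $\pi(h_n)=s_0\,\pi(h_{n+1})>\pi(h_{n+1})$, yielding an infinite strictly descending sequence in $S$ — impossible by the Artinian hypothesis. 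Hence the only surviving case is $s_0=1$.

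In that case $1=\pi(f)\in U(S)$, so if $a=f(1)$ were a unit of $R$ then $f$ would be a unit of $A$ by \cite[Proposition 3.2]{von}; thus $a$ is a nonunit. Here $\omega_{s_0}=\omega_1=\mathrm{id}_R$, so the recursion for the leading coefficients $c_n:=h_n(\pi(h_n))$ collapses to $c_n=a\,c_{n+1}$, whence $c_0=a^n c_n\in a^n R$ for all $n$. Therefore $c_0=g(\pi(g))\in\bigcap_{n\geq 1}a^n R=\{0\}$ because $R$ is a right archimedean domain and $a$ is a nonunit — contradicting $g\neq 0$. I expect the main obstacle to be precisely this last step: the leading coefficient of $g$ a priori lies only in the twisted products $a\,\omega_{s_0}(a)\cdots\omega_{s_0^{n-1}}(a)\,R$, which the single-element archimedean property of $R$ cannot control; the reduction to $s_0=1$, where these twisted products degenerate into genuine powers $a^n$, is exactly what renders the right archimedean hypothesis on $R$ applicable.
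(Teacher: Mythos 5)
Your proof is correct, but it is the mirror image of the paper's argument, and the difference is substantive rather than cosmetic. The paper works with the other side: it assumes a nonzero $g\in\bigcap_{n\ge 1}Af^n$, writes $g=h_nf^n$, and extracts leading coefficients as $g(\pi(g))=h_n(\pi(h_n))\,\omega_{\pi(h_n)}\bigl(f^n(\pi(f^n))\bigr)$, so the twist lands on the powers of $f(\pi(f))$; in the decisive case $\pi(f)=1$ the paper concludes $g(\pi(g))\in\bigcap_{n}R\bigl(\omega_{\pi(g)}(f(\pi(f)))\bigr)^n$, and this is precisely where the hypothesis that $\omega_s$ preserves nonunits is needed, to know that $\omega_{\pi(g)}(f(1))$ is a nonunit before the archimedean property of $R$ can be invoked. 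Your decomposition $g=f^nh_n$ with the cancellation recursion $h_n=fh_{n+1}$ puts the twist on the $h$-side instead, and since $\omega_{\pi(f)}=\omega_1=\mathrm{id}_R$ in the only surviving case, you get honest powers $a^n$; consequently your argument never uses the preserves-nonunits hypothesis, i.e.\ it proves the $\bigcap_n f^nA$-version of the conclusion under strictly weaker assumptions. Note that your reading is the one that literally matches the paper's definition (left archimedean is $\bigcap_n Rr^n=\{0\}$, so right archimedean should be $\bigcap_n r^nR=\{0\}$), whereas the paper's own proof actually establishes the $Af^n$-version while calling it right archimedean --- a left/right inconsistency internal to the paper, and in a twisted ring the two versions genuinely require different hypotheses, as the comparison above shows. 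Two further points in your write-up are improvements: the observation that an Artinian strictly ordered monoid has no elements $<1$ eliminates the paper's case $\pi(f)<1$ a priori, and your uniform recursion subsumes both the paper's separate case $\supp(f)=\{1\}$ and its case $\pi(f)=1$, the latter of which is written in a garbled form in the paper ($(h_1f)(1)=(h_2f)(1)=\cdots$ followed by a non sequitur) but is repaired exactly by the kind of leading-coefficient recursion you use.
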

\begin{proof}
Set $A=R[[S,\omega]]$. It is clear that  $A$ is a domain. Assume to
the contrary that $f$ is a nonunit element of $A$. So there is a
nonzero element $g$ in $\bigcap_{n\ge{1}}Af^n$. Then for each $n \in
\mathbb{N}$ there exists $h_n\in A$ such that $g=h_nf^n$. Using
\cite[Proposition 3.1(i)]{zim}, we get $\pi(g)=\pi(h_nf^n)$ and so
$\pi (g)=\pi(h_n)\pi(f^n)$. So
$g(\pi(g))=h_n(\pi(h_n))\omega_{\pi(h_n)}\left(f^n(\pi(f^n))\right)$.
There are two cases.

First, let $\supp(f)=\{1\}$, then $\pi(h_n)=\pi(g)$. So
$g(\pi(g))=h_n(\pi(h_n))\omega_{\pi(g)}\left(f^n(\pi(f^n))\right)$. Also $f^n(\pi(f^n))=\left(f(\pi(f))\right)^n$, since $R$ is a domain. So for each $n\in \mathbb{N}$,
\begin{align*}
g(\pi(g))=h_n(\pi(h_n))\left(\omega_{\pi(g)}\left(f(\pi(f))\right)\right)^n\in R\left(\omega_{\pi(g)}\left(f(\pi(f))\right)\right)^n.
\end{align*}
This yields that $g(\pi(g))\in \bigcap_{n\in\mathbb{N}}R\left(\omega_{\pi(g)}\left(f(\pi(f))\right)\right)^n
$. Also $f(\pi(f))$ is not a unit, since otherwise $\pi(f)$ and $f(\pi(f))$ would be both units and $f$ will be unit, by \cite[Proposition 3.1(i)]{zim}.

Since $R$ is right archimedean, as $g(\pi(g))=0$, which contradicts the fact that $\pi(g)\in \supp(g)$. Thus $g=0$ and the result follows.

Now suppose that $\supp(f)\neq \{1\}$. There are three cases.

Case 1. $\pi(f)>1$. We know that $\pi(g)=\pi(h_n)\pi(f)^n$.
So $\pi(h_n)<\pi(h_{n-1})$ for each $n$. Thus $\{\pi(h_n)\}$ forms a descending chain and must have a maximal element which is a contradiction.

Case 2. $\pi(f)<1$. Which means that $\{\pi(f^n)\}$ forms a descending chain and has a maximal element. This is also a contradiction.

Case 3. $\pi(f)=1$. So $f(\pi(f))$ is not a unit since otherwise $f$ would be a unit. Also one can see that
\begin{align*}
(h_1f)(1)=(h_2f)(1)=\cdots
\end{align*}
So $h_1f=0$, since $R$ is archimedean. So $h_if^i=0$ for all $i$ which contradicts our assumption.
\end{proof}
This result can be applied even when the endomorphisms $\omega_s$ are rigid monomorphisms. In fact, we proved the above theorem for $S$-rigid ring $R$ as follows.

\begin{theorem}\label{rigidar}
Let $S$ be an Artinian strictly totally ordered monoid and let $R$ be a right archimedean and $S$-rigid ring. Assume that $R$
satisfies the ACC on annihilators and that $\omega_s$ preserves nonunits of $R$ for each $s\in S$. Then $R[[S,\omega]]$ is
 a right archimedean reduced ring.
\end{theorem}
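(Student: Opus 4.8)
The plan is to first settle reducedness and then reduce the archimedean assertion to the already-established domain case, Theorem \ref{domar}, by factoring out a suitable completely prime ideal exactly as in the proof of Theorem \ref{reza}. For reducedness, observe that an Artinian strictly totally ordered monoid is automatically narrow (a total order has no two incomparable elements) and a unique-product monoid, hence an a.n.u.p.-monoid; since $R$ is $S$-rigid, \cite[Theorem 4.12]{unified} gives at once that $A:=R[[S,\omega]]$ is reduced. It remains to prove that $A$ is right archimedean. I would argue by contradiction: fix a nonunit $f\in A$ (we may assume $f\neq 0$) and suppose there is $0\neq g\in\bigcap_{n\ge 1}Af^{n}$, so that $g=h_{n}f^{n}$ for suitable $h_{n}\in A$.

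The heart of the argument is to manufacture a completely prime, $\omega_{s}$-invariant ideal $P$ of $R$ with $g(\pi(g))\notin P$. Following the construction in Theorem \ref{reza}, I would attach to each $c\in A$ the ideal $I_{c}=\{k(\pi(k)) : k\in AcA\}\cup\{0\}$ of $R$ and choose $P$ maximal among the annihilator ideals $\Ann_{R}(\bigcup_i I_{c_i})$ arising from $g$ and the $h_{n}$; such a maximal element exists because $R$ is reduced (so that \emph{ACC} on right annihilators transfers to left annihilators, since $ra=0\Leftrightarrow ar=0$) together with the hypothesized \emph{ACC} on annihilators. Complete primeness of $P$ and its $\omega_{s}$-invariance are obtained precisely as in Theorem \ref{reza}, using Lemma \ref{kav} and the injectivity of the $\omega_{s}$ built into $S$-rigidity; the choice of $P$ is made so that $g(\pi(g))\notin P$, i.e. $\bar g\neq\bar 0$ in the quotient. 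Passing to $T:=(R/P)[[S,\bar\omega]]$, the factor ring $R/P$ is a right archimedean domain, $\bar\omega_{s}$ is injective (as $\omega_{s}$ is injective and $P$ is $\omega$-invariant) and preserves nonunits (because $\omega_{s}$ does); hence Theorem \ref{domar} applies and shows that $T$ is a right archimedean domain.

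Now the contradiction is immediate in $T$: reducing $g=h_{n}f^{n}$ gives $\bar g=\bar h_{n}\,\bar f^{\,n}$ for all $n$, so $\bar g\in\bigcap_{n\ge 1}T\bar f^{\,n}$, while $\bar g\neq\bar 0$. If $\bar f$ were a unit of $T$, then by \cite[Proposition 3.2]{von} and the fact that $\bar\omega$ preserves nonunits one would be forced to conclude that $f$ itself is a unit of $A$, contrary to our choice; hence $\bar f$ is a nonunit of $T$. Since $T$ is right archimedean, $\bigcap_{n\ge 1}T\bar f^{\,n}=\{0\}$, forcing $\bar g=\bar 0$ --- a contradiction. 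Therefore $\bigcap_{n\ge 1}Af^{n}=\{0\}$ for every nonunit $f$, and $A$ is right archimedean; together with the first paragraph this proves that $A$ is a right archimedean reduced ring.

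I expect the delicate step to be the construction and control of $P$. In the non-domain setting the obstruction is genuine leading-coefficient cancellation: one may well have $h_{n}(\pi(h_{n}))\,f^{n}(\pi(f^{n}))=0$ in $R$ with both factors nonzero, so that $\pi(g)\neq\pi(h_{n})\pi(f)^{n}$ and the clean degree bookkeeping of Theorem \ref{domar} breaks down over $R$. Passing to the completely prime quotient is exactly what removes this pathology, but it forces two things to be checked carefully: (i) that $P$ can be chosen completely prime and $\omega_{s}$-invariant while still keeping $g(\pi(g))\notin P$ (here Lemma \ref{kav}, which converts the vanishing of a twisted product into that of the untwisted one, is the essential tool); and (ii) that $R/P$ genuinely remains right archimedean and that $\bar\omega_{s}$ retains injectivity and preservation of nonunits, so that Theorem \ref{domar} is legitimately applicable to $T$. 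Verifying (ii) --- the inheritance of the archimedean property by the completely prime factor --- is the point I would scrutinize most closely.
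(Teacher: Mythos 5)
Your outline reproduces the paper's own strategy (its ``Case 2''): attach ideals $I_c$, take a maximal annihilator $P$, prove it completely prime and $\omega$-invariant, pass to $(R/P)[[S,\bar\omega]]$ and invoke Theorem \ref{domar}. But the step where you dispose of the possibility that $\bar f$ is a unit of $T$ is wrong, and it is precisely the step that constitutes the technical heart of the paper's proof. \cite[Proposition 3.2]{von} gives only the implication ``$f(s)\in U(R)$ for some $s\in U(S)$ implies $f\in U(R[[S,\omega]])$''; since here $U(S)=\{1\}$, invertibility of $\bar f$ in $T$ tells you only that $f(1)+P$ is a unit of the domain $R/P$, and units do not lift along quotient maps (compare $2+5\mathbb{Z}$ in $\mathbb{Z}/5\mathbb{Z}$): nothing forces $f(1)$, hence $f$, to be a unit of $A$. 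The hypothesis that each $\omega_s$ preserves nonunits concerns $\omega_s$, not the projection $R\to R/P$, so it cannot bridge this. If $\bar f$ happens to be a unit of $T$, your final contradiction evaporates, because $\bigcap_{n}T\bar f^{\,n}=\{0\}$ then carries no information. The paper spends sub-cases (i)--(iv) of its Case 2 on exactly this point: assuming $\bar f$ invertible, it writes $1+l_0=b_1f+l_1=b_2f^2+l_2=\cdots$ with all coefficients of the $l_i$ in the annihilator prime $V$, multiplies through by the leading coefficient $(a_1f)(\pi(a_1f))$ of the witness element (which annihilates $V$), and then applies the right archimedean property of $R$ itself to the resulting constant coefficients $t_i=(a_1f)(\pi(a_1f))b_i(1)$ and $r=f(1)$ to reach a contradiction. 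Some argument of this kind is indispensable, and your proposal has nothing in its place.

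Two structural points besides this. First, for maximality to yield complete primeness, you must maximize over the annihilators of \emph{all} witness configurations $0\neq b_1f=b_2f^2=\cdots$ (the paper's set $T$), not merely over those ``arising from $g$ and the $h_n$'': the primeness proof compares $P$ with the annihilator of the modified witness obtained by multiplying by $b$, and that annihilator must lie in the family being maximized. Second, the paper splits into Case 1 (every such annihilator is zero) and Case 2 (some annihilator is nonzero); in Case 1 there is no nonzero annihilator to maximize and the paper argues directly on $\pi(g)$ and leading coefficients, using that $R$ is right archimedean. Your single-case treatment silently assumes Case 2; if all the annihilators vanish, completely primeness of $P=\{0\}$ would say $R$ is a domain, which is not a hypothesis, so this degenerate situation needs separate handling (or a careful set-up of the family that makes the maximality argument cover it). Finally, the point you flagged as most delicate --- that $R/P$ remains right archimedean --- is indeed not automatic (a nonunit modulo $P$ has $\bigcap_n(a^nR+P)$ possibly larger than $P$); the paper asserts this without proof as well, so there you are no worse off than the paper, but your instinct to scrutinize it is sound.
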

\begin{proof}
Assume to the contrary that $A=R[[S,\omega]]$ is not a right archimedean ring. Then there exists $g\in A$ such that $\bigcap_{n\in \mathbb{N}}Ag^n\neq \{0\}$.
Also consider $I_f=\{g(\pi(g))|g\in AfA\}$ for $f\in A$ as in Theorem \ref{domar}. There are two cases for $g$:

Case 1. $\Ann(\bigcup_{i\in \mathbb{N}}I_{a_i})=0$ for all $a_i\in A$ such that
\begin{align*}
0\neq a_1g=a_2g^2=\cdots =a_ng^n=\cdots.
\end{align*}
 So for each $i$, one can see that
\begin{align*}
\pi(a_ig^i)=\pi(a_i)\pi(g^i)=\pi(a_i)\pi(g)^i.
\end{align*}
Note that since $R$ is $S$-rigid, $A$ is reduced by (\cite{unified},\cite{frohnACC}) and $\pi(g^i)=\pi(g)^i$. Now, there are three cases:

$(i)$: $\pi(g)<1$. Then we have
\begin{align*}
1>\pi(g)>\pi(g^2)\cdots
\end{align*}
which should be stabilized and it is a contradiction.

$(ii)$: $\pi(g)>1$. Then, since $\pi(a_i)\pi(g^i)=\pi(a_{i+1})\pi(g^{i+1})$, we have
\begin{align*}
\pi(a_1)>\pi(a_2)>\cdots .
\end{align*}
Since $S$ is Artinian, $\pi(a_i)=\pi(a_{i+1})$ for some $i$. But this contradicts $\pi(g)>1$.

$(iii)$: $\pi(g)=1$, Then $g(\pi(g))$ is not a unit as $g$ is not a unit. So
\begin{align*}
a_i(\pi(a_i))g^i(\pi(g^i))=a_{i+1}(\pi(a_{i+1}))g^{i+1}(\pi(g^{i+1}))\quad for \quad i\in \mathbb{N}.
\end{align*}
So $a_1(\pi(a_1))g(\pi(g))\in Rg^i(\pi(g^i))$ for all $i\in \mathbb{N}$ and since $R$ is $S$-rigid and hence reduced, we have $a_1(\pi(a_1))g(\pi(g))\in R(g(\pi(g)))^i$. Also, $R$ is archimedean, and $a_1(\pi(a_1))g(\pi(g))\in \bigcap_{i\in \mathbb{N}}R(g(\pi(g)))^i$, so $a_1(\pi(a_1))g(\pi(g))=0$. So
\begin{align*}
0=a_1(\pi(a_1))g(\pi(g))=a_2(\pi(a_2))\left(g(\pi(g))\right)^2=\cdots.
\end{align*}
It is easy to show that $a_i(\pi(a_i))g(\pi(g))=0$ because of the fact that $R$ is reduced. So for all $i$, $g(\pi(g))\in \Ann\left(a_i(\pi(a_i))\right)$. So
\begin{align*}
g(\pi(g))\in \Ann\left(\bigcup_{i\in \mathbb{N}}I_{a_i}\right)=\{0\}.
\end{align*}
Hence $g(\pi(g))=0$ which is a contradiction.

Case 2. Let $\Ann\left(\bigcup_{i\in \mathbb{N}}I_{a_i}\right)\neq
\{0\}$  such that
\begin{align*}
0\neq a_1g=a_2g^2=\cdots =a_kg^k=\cdots.
\end{align*}
Define
\begin{align*}
T=\{\Ann\big(\bigcup_{i\in \mathbb{N}}I_{b_ig}\big)|\Ann\big(\bigcup_{i\in \mathbb{N}}I_{b_ig}\big)\neq 0, \quad 0\neq b_1g=\cdots =b_2g^2=\cdots \}.
\end{align*}
We know that $T\neq\emptyset$ and $R$ satisfies \emph{ACC} on annihilators. So $T$ has a maximal element like $\Ann\big(\bigcup_{i\in \mathbb{N}}I_{a_if}\big)$ for some $f\in A$ such that
\begin{align}\label{sharto}
0\neq a_1f=a_2f^2=\cdots =a_nf^n=\cdots .
\end{align}
We claim that $V:=\Ann\big(\bigcup_{i\in \mathbb{N}}I_{a_if}\big)$ is a completely prime ideal of $A$. Since $R$ is $S$-rigid, $V$ is a two sided ideal. Let $ab\in V$ and $a,b$ are not in $V$. Then
\begin{align*}
ab(a_if)(\pi(a_if))=0\quad for \quad all \quad i\in \mathbb{N}.
\end{align*}
So $a\in \Ann(I_{ba_if})$. But it is easy to see that
\begin{align}\label{shar}
V\subseteq \Ann(I_{ba_if}).
\end{align}
Since $b\notin V$, we can see that $\Ann(I_{ba_if})\neq \{0\}$. Also by multiplying b to equation (\ref{sharto}), we have
 \begin{align*}
0\neq ba_1f=ba_2f^2=\cdots \quad.
\end{align*}
So $\Ann(I_{ba_if})\in T$. Also $V$ is maximal in $T$. This and \ref{shar} yields that $V=\Ann(I_{ba_if})$, which implies that $a\in V$ which is a contradiction. So $V$ is a two sided completely prime ideal.

Now we show that $\omega_s$ is $V$-invariant for each $s\in S$. Let $r\in V$. Then $rI_{a_i}=0$ by Lemma \ref{kav}. So $V\subseteq \omega_s(V)$ for each $s\in S$. Now let $r\in \omega_s(V)$, so $r\in \omega_s(t)$ such that $t\in V$ and $tI_{a_i}=0$. So $\omega_s(tI_{a_i})=0$ which means that
$r\omega_s(I_{a_if})=\omega_s(t)\omega_s(I_{a_if})$.
So $r\in \Ann(\omega_s(I_{a_if}))$. But we have $V\subseteq\omega_s(V)$ which means that
\begin{align*}
\Ann(I_{a_if})\subseteq \omega_s\left(\Ann(I_{a_if})\right)=\Ann\left(\omega_s(I_{a_if})\right).
\end{align*}
 So $r\in \Ann(I_{a_if})=V$. Hence $\omega_s^{-1}(V)=V$ and $\omega_s$ is $V$-variant for each $s\in S$.

We know that $V$ is a completely prime ideal and hence the factor ring $U:=\frac{R}{V}$ is an archimedean domain and by Theorem \ref{domar}, $U[[S,\overline{\omega}]]$ is an archimedean domain.

We know that there exists $a_i\in A$ such that $a_1f=a_2f^2=\cdots$. Hence $\overline{a_1}\overline{f}=\overline{a_2}\overline{f^2}=\cdots$. We claim that $\overline{f}$ is nonunit in $W=U[[S,\overline{\omega}]]$. Otherwise, let $\overline{f}$ be a unit. So there exists $g\in A$ such that
$\overline{f}\overline{g}-\overline{1}=\overline{0}$; which means that
\begin{align*}
(fg-1)(\pi(fg-1))a_if(\pi(a_if))=0 \quad for \quad all \quad i\in\mathbb{N}.
\end{align*}
We have four cases.

$(i)$: $\pi(fg-1)=1_S$ and $(fg-1)(1)=-1_R$. Hence $\pi(f)\neq 1$ or $\pi(g)\neq 1$ which means $\overline{f}$ or $\overline{g}$ are not unit since $S$ is Artinian strictly totally ordered monoid, and the only unit of $S$ is $1$. So it is a contradiction.

$(ii)$: $\pi(fg)<1$. So
\begin{align*}
1>\pi(fg)>\pi(fg)^2>\cdots
\end{align*}
which contradicts to the fact that $S$ is Artinian.

$(iii)$: $\pi(fg-1)\neq 1$. So $fg=1+k$ for some $k\in W\setminus U$ and $\pi(k)>1$. So $f(\pi(f))\omega_{\pi(f)}\left(g(\pi(g))\right)=1$. Thus $f(\pi(f))$ is a unit and $\pi(f)=1$. This means that $f$ is a unit and it contradicts to the definition of $f$.

$(iv)$: Suppose that $fg(1)=r\neq 1$ and $\pi(fg)=1$ (i.e. $r\neq 0$). We know that $\overline{f}$ is a unit. Then $\overline{1}\in W\overline{f}^n$. So there exist $b_i\in A$, $l_i\in V$ such that
\begin{align}\label{shr}
1+l_0=b_1f+l_1=b_2f^2+l_2=\cdots .
\end{align}
By multiplying $(a_1f)(\pi(a_1f))$, it yields that
\begin{align*}
(a_1f)(\pi(a_1f))=(a_1f)(\pi(a_1f))b_1f=(a_1f)(\pi(a_1f))b_2f^2=\cdots .
\end{align*}
If $(a_1f)(\pi(a_1f))b_i(1)=t_i$ and $f(1)=r$, then $t_ir^{i}=0$ for some $i$. Hence $(t_ir)^{i}=0$ and since $R$ is reduced, $t_ir=0$.
So $(a_1f)(\pi(a_1f))b_i(1)f(1)=0$. So $\overline{b_i}(1)\overline{f}(1)=\overline{0}$ which means that $\overline{f}(1)=\overline{0}$ or $\overline{b_i}(1)=\overline{0}$. If $\overline{b_i}(1)=\overline{0}$, then $\overline{b_i}$ is not a unit which is impossible (We know that $\overline{b_i}\overline{f}^i=\overline{b_{i-1}}\overline{f}^{i-1}$ by equation \ref{shr}, so $\overline{b_i}\overline{f}^{i-1}=\overline{b_1}=\overline{1}$ which means that $\overline{b_i}$ is a unit). So $\overline{f}(1)=\overline{0}$ which is impossible according to our assumption,  $\overline{f}$ that is a unit.

So for each $i$, $t_ir^{i+1}\neq 0$ which means that $t_ir^{i+1}=t_{i+1}r^{i+2}$.
Since $R$ is archimedean, $\bigcap_{i}t_ir^{i+1}=0$ which contradicts to the fact that $\overline{b_i}$ is a unit for each $i$.

So $\overline{f}$ is a nonunit. We have $\overline{a_1}\overline{f}=\overline{a_2}\overline{f^2}=\cdots$.
Since $W$ is archimedean, $\overline{a_1}\overline{f}=\overline{0}$. If $\overline{a_1}=\overline{0}$, then $\overline{a_1}(s)=\overline{0}$ for every $s\in S$. So $a_1(s)(a_1f)(\pi(a_1f))=0$ for every $s\in S$. Also $\omega_t\left(a_1(s)(a_1f)\right)(\pi(a_1f))=0$ for $s,t\in S$. Hence, by Lemma \ref{kav},
\begin{align}
\left((a_1f)(\pi(a_1f))\right)&\omega_{\pi(a_1f)}\left((a_1f)(\pi(a_1f))\right)=(a_1f)^2(\left(\pi(a_1f)\right)^2)\nonumber\\
&=(a_1f)(\pi(a_1f))\omega_{\pi(a_1f)}\left(\sum_{xt=\pi(a_1f)}a_1(x)\omega_x(f(t))\right)\nonumber\\
&=\sum_{xt=\pi(a_1f)}\left((a_1f)(\pi(a_1f))\omega_{\pi(a_1f)}\left(a_1(x)\omega_x(f(t))\right)\right)=\sum_{xt=\pi(a_1f)}0=0.
\end{align}
So $\left((a_1f)(\pi(a_1f))\right)\omega_{\pi(a_1f)}\left((a_1f)(\pi(a_1f))\right)=0$ which yields $\left((a_1f)(\pi(a_1f))\right)^2=0$ by Lemma \ref{kav}. Thus $(a_1f)(\pi(a_1f))=0$ which is a contradiction. The case $\overline{f}=\overline{0}$ is impossible, similarly. This completes the proof.
\end{proof}

We provide an example of an   archimeadean \emph{ACCP}-ring $R$ with a homomorphism $\alpha$ which is not rigid, but $A=R[x;\alpha]$ is not an \emph{ACCPL}-ring. Note that,
an element $r=v_{i_1}v_{i_2}\cdots v_{i_k}$ is considered as a monomial with degree $k$. The degree of  $s=r_1+r_2+\cdots$ is $\max_{i\in \mathbb{N}}\textit{degree}(r_i)$.

\begin{example}
Suppose that $k$ is a field and consider the ring $R$ as follows:
\begin{align}
R=\{k+v_1k+v_2k+\cdots+v_1v_2k+v_1v_3k+\cdots |v_i^2=0,\quad v_iv_j=v_jv_i \quad \textit{for every i,j}\}.
\end{align}
In fact, $R$ also can be defined as follows
\begin{align}
R=\frac{k[x_1,x_2,\cdots ]}{(x_1^2,x_2^2,\cdots)k[x_1,x_2,\cdots]}.
\end{align}

Define $\alpha$ on $R$ given by

\begin{align}
\alpha(v_i)=\begin{cases}
v_{i+1}\quad ;   i \quad \textit{ a positive even number}; \\
0 \quad \quad ;    i \quad \textit{is odd}.
\end{cases}
\end{align}
It is obvious that $\alpha(1)=1$. We claim that $R$ satisfies \emph{ACCP}.

Now suppose that $R$ does not satisfy \emph{ACCP}. So there exists a non-stabilized chain:
\begin{align*}
a_1R\subseteq a_2R\subseteq \cdots , \quad \textit{with} \quad a_i\in R.
\end{align*}
One can see easily that $degree(a_i)>degree(a_{i+1})$. So $degree(a_1)=\infty$ which is a contradiction. So $R$ is an \emph{ACCP}-ring. Also we claim that $R$ is left archimeadean. Let there exists $a\in R$ such that
\begin{align*}
\bigcap_{n\in \mathbb{N}}Ra^n\neq \{0\}.
\end{align*}
Then there is $t\in Ra^n$ for each $n$ which means that the degree of $t$ should be $\infty$ and it is also a contradiction. So $R$ is  left archimeadean.

The homomorphism $\alpha$ is not rigid because $v_3\alpha(v_3)=0$, but $v_3\neq 0$.

We claim that $A=R[x;\alpha]$ is not an \emph{ACCPL}-ring. To do this, consider the following sequence:
\begin{align*}
f_0=a_0x+b_0 \quad, \quad f_n=a_nx+b_n,
\end{align*}
where
\begin{align*}
a_0=v_1+v_3+\cdots\quad , \quad b_0=1+v_2+v_4+\cdots
\end{align*}
and
\begin{align*}
a_n=(p_0)^{-1}\left(a_{n-1}-p_1\alpha((p_0)^{-1})\alpha(b_{n-1})\right)
\end{align*}
such that
\begin{align*}
p_0=1+v_1\quad and\quad p_1=v_1+v_3+\cdots.
\end{align*}
One can see that
\begin{align*}
f_{n-1}=(p_1x+p_0)f_{n}.
\end{align*}
So we get the following chain:
\begin{align}\label{zanjirmesal}
Rf_0\subseteq Rf_1\subseteq\cdots.
\end{align}
Note that $p_1$ is not nilpotent. Since degree of $p^k$ is $k$, so $p_1x+p_0$ is not a unit and we get
\begin{align*}
Rf_0\subsetneq Rf_1\subsetneq\cdots,
\end{align*}
which shows that $A$ does not satisfies \emph{ACCPL}.
\end{example}

\end{document}